\newcommand{\Tr}[2]{\operatorname{{\tau}}_{{#2}} (#1)}
\newcommand{\Endo}[2]{\operatorname{End}_{#1} (#2)}
\renewcommand{\hom}[3]{\operatorname{Hom}_{#1} (#2, #3)}
\newcommand{\Ext}[4]{\operatorname{Ext}^{#1}_{#2} (#3, #4)}
\newcommand{\cR}{\mathcal R}
\newcommand{\cT}{\mathcal T}
\newcommand{\cU}{\mathcal U}
\newcommand{\cV}{\mathcal V}
\newcommand{\fm}{\frak{m}}
\def\im{\operatorname{Im}}
\def\ker{\operatorname{Ker}}
\def\id{\operatorname{id}}
\def\id{\operatorname{id}}
\def\fm{\mathfrak m}
\def\Hom{\operatorname{Hom}}
\def\Ext{\operatorname{Ext}}
\newcommand{\modR}{\operatorname{mod}R}
\newcommand{\ModR}{\operatorname{Mod}R}
\newcommand{\ModRop}{\operatorname{Mod}R^{\mathrm{op}}}
\newcommand{\Mod}[1]{\mathsf{Mod}(#1)}
\newcommand{\Rop}{R^{\mathrm{op}}}
\newcommand{\trace}[2]{\tau_{#1}(#2)}
\newcommand{\MCM}[1]{\operatorname{MCM}#1}
\newcommand{\fid}[2]{\operatorname{fid}_{#1}#2}
\newcommand{\fpinj}[1]{\operatorname{fpInj}#1}
\newcommand{\Inj}[1]{\operatorname{Inj}#1}
\numberwithin{equation}{section}
\theoremstyle{plain}
\newtheorem{thm}[equation]{Theorem}          \newtheorem*{thm*}{Theorem}
\newtheorem{prp}[equation]{Proposition}      \newtheorem*{prp*}{Proposition}
\newtheorem{cor}[equation]{Corollary}        \newtheorem*{cor*}{Corollary}
\newtheorem{lem}[equation]{Lemma}            \newtheorem*{lem*}{Lemma}
          \newtheorem*{cnj*}{Conjecture}
            \newtheorem*{fct*}{Fact}
\theoremstyle{definition}
\newtheorem{dfn}[equation]{Definition}       \newtheorem*{dfn*}{Definition}
     \newtheorem*{con*}{Construction}
     \newtheorem*{funcon*}{Functorial Constructions}
      \newtheorem*{obs*}{Observation}
\newtheorem{rmk}[equation]{Remark}           \newtheorem*{rmk*}{Remark}
\newtheorem{exa}[equation]{Example}          \newtheorem*{exa*}{Example}
         \newtheorem*{exe*}{Exercise}
         \newtheorem*{qst*}{Question}
            \newtheorem{stp*}{Setup}
            \newtheorem*{set*}{Setting}
            \newtheorem*{ntn*}{Notation}
\title{The trace property in preenveloping classes}
\author[H.\ Lindo]{Haydee Lindo} 
\address{H.L. \ Harvey Mudd College, Claremont, CA 91711}
\email{HLindo@hmc.edu} 
\author[P.\ Thompson]{Peder Thompson} 
\address{P.T. \ Niagara University, NY 14109, and M\"{a}lardalen University, V\"{a}ster{\aa}s, Sweden}
\email{peder.thompson@mdu.se}
\thanks{This material is based upon work supported by the National Science Foundation under Grant No. 2137949}
  \date{March 6, 2023}                                           
\thanks{Keywords: trace ideal, trace module, endomorphism invariance, fully invariant, Gorenstein ring, regular ring, enveloping class, preenvelope, reflexive}
\subjclass[2010]{Primary 13C05. Secondary 13H10, 13C13, 16E50.}
\begin{document}

\begin{abstract}
We develop the theory of trace modules up to isomorphism and explore the relationship between preenveloping classes of modules and the property of being a trace module, guided by the question of whether a given module is trace in a given preenvelope. As a consequence we identify new examples of trace ideals and trace modules, and characterize several classes of rings with a focus on the Gorenstein and regular properties.
\end{abstract}

\maketitle
\section{Introduction}
\noindent
Let $R$ be a ring and let $M$ be an $R$-submodule of another $R$-module $X$. The module $M$ is fully invariant in $X$, or endomorphism invariant in $X$, if $\varphi(M) \subseteq M$ for every endomorphism $\varphi$ of $X$. In \cite{JW61}, Johnson and Wong motivated the study of endomorphism invariance by establishing that quasi-injective modules, earlier known for their role in direct sum decompositions of modules, are simply modules that are endomorphism invariant in their injective envelopes; see \cite{Invar,ZimZim}.

This paper explores the relationship between the theory of endomorphism invariant modules, the notion of reflexivity of a module, and the modern theory of trace modules --- particularly in the language of preenveloping classes of modules. Here, an $R$-module $M$ is a trace module if there exist $R$-modules $A$ and $X$ such that 
\[M = \sum_{\alpha \in \Hom_R(A,X)} \alpha(A).\] 

There has been recent interest in the theory of trace ideals over commutative rings, where a trace module in X is called a trace ideal provided $X=R$; see \cite{DKT21, GIK20, canonHTSD, herzog2021set,IsoTrace,Lindo1,PRG21}. This, and other progress in the more general theory of trace modules, have shown certain trace modules to  have desireable properties, like nonrigidity, that is, the existence of self extensions; see \cite{Lin20}. A growing body of work has also demonstrated the ubiquity of trace modules,  and their subsequent usefulness in classifying artinian Gorenstein rings, unique factorization domains, and certain subclasses of hypersurfaces; see also \cite{IsoTrace, LP18}. 

In \cite{BrandtChar}, Brandt's exploration of automorphism invariance in injective modules points to the idea that when $X$ is part of a preenveloping class of $R$-modules then the trace submodules of $X$ are precisely its endomorphism invariant submodules. This paper formalizes this idea and explores its consequences. For example, it follows that quasi-injective modules, and other well-studied classes of endomorphism invariant modules, are examples of trace modules, and the language of trace modules can be used to characterize many more and significantly different classes of rings than previously established. Notably, we observe the following as equivalent to a part of the result \cite[Theorem 6.83]{Lam99} due to work of Osofsky in \cite{Oso64}:
%
$R$ is semisimple if and only if every $R$-module is trace in its injective envelope; see Theorem \ref{SS_char}.
In particular, this is a consequence of our first key technical result about (pre)enveloping classes, Theorem \ref{key_lemma}, which also leads to a characterization of von Neumann regular rings in Corollary \ref{VNR_char}. This complements recent work of Goto, Isobe, and Kumashiro in \cite[Lemma 4.6]{GIK20}, where the authors observe that every ideal in a von Neumann regular ring is a trace ideal.  



The paper is structured as follows. In Section \ref{sec_prelim}, we formalize the idea of both trace submodules and trace modules up to isomorphism, as well as give some of the basic properties of trace modules. In Section \ref{sec_fullyinv}, we explore the link between the trace property and notions such as being fully invariant or reflexive. In particular, these observations provide new examples of trace modules.  We prove two general technical results in Section \ref{sec_trace_preenv}, namely Theorems \ref{key_lemma} and \ref{key_lemma_2},  which provide a way to understand when one class of modules is trace in another class. As consequences, we characterize various rings in terms of traceness, including semisimple rings (Theorem \ref{SS_char}), Gorenstein rings of dimension at most 1 (Theorem \ref{Gor_dim1}), Gorenstein rings of higher dimension (Theorem \ref{Gorinj_char}), von Neumann regular rings (Theorem \ref{VNR_char}), and regular rings (Theorem \ref{char_reg}). Finally, in Section \ref{sec_trace_env}, we consider more closely traceness of ideals in enveloping classes. Our main tool here is Theorem \ref{key2}, from which we obtain a characterization of self-injective rings (Corollary \ref{inj_char}) and another characterization of Gorenstein rings (Theorem \ref{char_Gor}).




\section{Preliminaries on trace modules}\label{sec_prelim}
\noindent 
Throughout this paper, let $R$ be a ring with unity. An $R$-module is assumed to be a left $R$-module; right $R$-modules are considered as modules over the opposite ring, $\Rop$. The category of (left) $R$-modules is denoted $\ModR$. 

We first introduce some terminology. 
\begin{dfn}\label{trace_dfn}
Let $M$ and $X$ be $R$-modules. 
\begin{enumerate}
\item The \emph{trace module of $M$ in $X$} is the sum of all $R$-homomorphic images of $M$ in $X$, and it is denoted $\trace{M}{X}$:
$$\trace{M}{X}=\sum_{\alpha\in \Hom_R(M,X)}\alpha(M).$$
Equivalently, $\trace{M}{X}$ is the image of the natural homomorphism
$$\Hom_R(M,X)\otimes_R M \longrightarrow X$$
defined by evaluation across the tensor, that is, $f\otimes m\mapsto f(m)$ for $f$ in $\Hom_R(M,X)$ and $m$ in $M$. 

\item If $M$ is an $R$-submodule of $X$, we say that $M$ is a \emph{trace submodule} of $X$ if $M=\trace{N}{X}$ for some $R$-module $N$.
In the case $X=R$, a left ideal $I$ of $R$ is a \emph{trace ideal} of $R$ if $I=\trace{N}{R}$ for some $R$-module $N$.
\item If $\iota:M\to X$ is an injective $R$-homomorphism, we say that $M$ is \emph{trace in $X$ via $\iota$} if its image, $\im(\iota)$, is a trace submodule of $X$.
\item More generally, we say that $M$ is \emph{trace in $X$ up to isomorphism} if there exists an injection $\iota:M\to X$ such that $\im(\iota)$ is a trace submodule of $X$.
\end{enumerate}
\end{dfn}

Before moving on, we give a simple example illustrating these notions.
\begin{exa}
 Let $R=k[x]$, for a field $k$.  The ideal $(x)$ is not a trace ideal (that is, not a trace submodule of $R$), however, $(x)$ is {isomorphic} to a trace ideal. Indeed $\trace{(x)}{R}=(1)$, as multiplication by $1/x$ in the total ring of fractions of $R$ gives an isomorphism $\iota:(x)\to (1)$. Using the terminology in Definition \ref{trace_dfn}, we say that $(x)$ is trace in $R$ up to isomorphism, or, more precisely, that $(x)$ is trace in $R$ via $\iota:(x)\to R$.
\end{exa}

There has been recent work by Kobayashi and Takahashi exploring rings in which every ideal is isomorphic to a trace ideal; see \cite{IsoTrace}.

\begin{rmk}
Any trace ideal $I=\trace{N}{R}$ of $R$ is in fact a (two-sided) ideal. This seems to be known, for example see Herbera and P\v{r}\'{\i}hoda \cite{HP14}, but we provide a proof here for completeness.  It is only needed to show the right-absorption property holds: 
    
    Let $x\in I$ and $r\in R$. First suppose that there exists $\varphi:N\to R$ such that $\varphi(y)=x$ for some $y\in N$. Define $\widetilde{\varphi}:N\to R$ by $\widetilde{\varphi}(n)=\varphi(n)r$ for all $n\in N$. The mapping $\widetilde{\varphi}$ is an $R$-homomorphism and so by the definition of $\trace{N}{R}$ one has $xr \in \widetilde{\varphi}(N)\subseteq \trace{N}{R} =  I$.  In general, by the definition of trace modules, if $x \in I = \trace{N}{R}$ then $x$ is a finite sum of the form $x=\sum \varphi_i(y_i)$ for some $\varphi_i:N\to R$ and $y_i\in N$. The previous argument now shows that each $\varphi_i(y_i)r$ belongs to $I$, hence so does $x$.
\end{rmk}

The next fact gives useful alternative descriptions of trace submodules; it is \cite[Lemma 2.4]{Lin20}. Note that \cite{Lin20} subsumes the results from the unpublished \cite{Lindo2}.

\begin{lem}[\cite{Lin20}]\label{tracebasic}
Let $M$ be an $R$-submodule of $X$. The following are equivalent:
\begin{enumerate}
\item[(i)] $M$ is a trace submodule of $X$.
\item[(ii)] $M=\trace{M}{X}$.
\item[(iii)] The inclusion $M\subseteq X$ induces an isomorphism \[\Hom_R(M,M)\xrightarrow{\cong} \Hom_R(M,X).\]
\end{enumerate}
\end{lem}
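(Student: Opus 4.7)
The plan is to prove the cycle $(i) \Rightarrow (ii) \Rightarrow (iii) \Rightarrow (ii) \Rightarrow (i)$, with the last implication being immediate (take $N=M$ in the definition of trace submodule). The real content is in $(i) \Rightarrow (ii)$ and the equivalence $(ii) \Leftrightarrow (iii)$.

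For $(i) \Rightarrow (ii)$, I would start from $M = \trace{N}{X} = \sum_{\alpha \in \Hom_R(N,X)} \alpha(N)$. The inclusion $M \subseteq \trace{M}{X}$ is automatic by taking the identity on $M$ (viewed as a map $M \to X$ via the inclusion). For the reverse containment, fix any $\varphi \in \Hom_R(M,X)$ and any $\alpha \in \Hom_R(N,X)$. Since $\alpha(N) \subseteq M$, the composition $\varphi \circ \alpha$ lies in $\Hom_R(N,X)$, so $\varphi(\alpha(N)) \subseteq \trace{N}{X} = M$. Summing over all $\alpha$ shows $\varphi(M) \subseteq M$, and hence $\trace{M}{X} = \sum_\varphi \varphi(M) \subseteq M$.

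For $(ii) \Leftrightarrow (iii)$, I would observe that the inclusion $M \hookrightarrow X$ induces postcomposition $\iota_* \colon \Hom_R(M,M) \to \Hom_R(M,X)$, which is always injective because $M \hookrightarrow X$ is. Thus $\iota_*$ is an isomorphism if and only if it is surjective, which in turn says exactly that every $\varphi \colon M \to X$ factors as the inclusion after some $\bar\varphi \colon M \to M$, i.e.\ $\varphi(M) \subseteq M$ for every $\varphi \in \Hom_R(M,X)$. This surjectivity condition is plainly equivalent to $\trace{M}{X} \subseteq M$, which (combined with the automatic containment $M \subseteq \trace{M}{X}$ noted above) is exactly condition (ii).

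There is no real obstacle in this proof; the only slightly subtle point is to notice in $(i) \Rightarrow (ii)$ that one may replace the witnessing module $N$ by $M$ itself, and this is achieved by the composition trick $\varphi \circ \alpha$ above. Everything else is bookkeeping about images of homomorphisms and the fact that an injection remains injective after Hom.
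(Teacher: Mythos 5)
Your proof is correct, and since the paper simply cites \cite{Lin20} for this lemma rather than reproving it, there is nothing to diverge from: your argument is the standard one. The key points --- replacing the witness $N$ by $M$ via the composition $\varphi\circ\alpha$ in $(i)\Rightarrow(ii)$, and identifying surjectivity of $\Hom_R(M,M)\to\Hom_R(M,X)$ with the containment $\trace{M}{X}\subseteq M$ --- are exactly what is needed, and the injectivity of that map (from left exactness of $\Hom$) is correctly noted.
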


\begin{rmk}\label{hereditary}
Being a trace submodule of a module is hereditary with respect to submodules:  If $M \subseteq X\subseteq Y$ and $M$ is a trace submodule of $Y$, then $M$ is a trace submodule of $X$. This follows from the equivalence $(i)\Leftrightarrow(iii)$ in Lemma \ref{tracebasic}.
\end{rmk}

\begin{rmk}\label{iso_trace_equal}
Let $M$, $M'$, and $X$ be $R$-modules such that $M\cong M'$. Since $\Hom_R(M,X)\cong \Hom_R(M',X)$, it follows that $\trace{M}{X}=\trace{M'}{X}$.
\end{rmk}
The next fact extends Lemma \ref{tracebasic}.
\begin{lem}\label{tracebasic_iso}
Let $M$ and $X$ be $R$-modules. The following are equivalent:
\begin{enumerate}
    \item[(i)] $M$ is trace in $X$ up to isomorphism.
    \item[(ii)] $M\cong \trace{M}{X}$.
    \item[(iii)] There exists an injective $R$-homomorphism $M\to X$ that induces an isomorphism $\Hom_R(M,M)\xrightarrow{\cong}\Hom_R(M,X)$.
\end{enumerate}
\begin{proof}
Assume $(i)$ holds. There exists an injective $R$-homomorphism $\iota:M\to X$ such that $\im(\iota)$ is a trace submodule of $X$. Now Lemma \ref{tracebasic} and Remark \ref{iso_trace_equal} yield 
$$M\cong \im(\iota)=\trace{\im(\iota)}{X}=\trace{M}{X}\;,$$
and so $(ii)$ holds. Next assuming $(ii)$, there is an evident injective $R$-homomorphism $\iota:M\to X$ induced by the composition $M\cong \trace{M}{X}\subseteq X$. This induces isomorphisms $\Hom_R(M,M)\cong \Hom_R(M,\trace{M}{X})\cong\Hom_R(M,X)$, the latter by Lemma \ref{tracebasic}. Finally, assuming $(iii)$, let $\iota:M\to X$ be an injection such that the induced $\Hom_R(M,M)\to \Hom_R(M,X)$ is an isomorphism. Thus any homomorphism $\varphi:M\to X$ satisfies $\im(\varphi)\subseteq\im(\iota)$, hence $\im(\iota)$ is a trace submodule of $X$ and $(i)$ holds.
\end{proof}

\end{lem}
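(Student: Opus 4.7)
The plan is to prove the three conditions equivalent by the cyclic chain $(i) \Rightarrow (ii) \Rightarrow (iii) \Rightarrow (i)$, using Lemma \ref{tracebasic} and Remark \ref{iso_trace_equal} as the principal tools to transfer statements between $M$ and its image in $X$.

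For $(i) \Rightarrow (ii)$, I would start with an injection $\iota \colon M \to X$ whose image is a trace submodule of $X$. Applying Lemma \ref{tracebasic} yields $\im(\iota) = \trace{\im(\iota)}{X}$, and invoking Remark \ref{iso_trace_equal} together with $M \cong \im(\iota)$ rewrites the right-hand side as $\trace{M}{X}$. Chaining these gives the desired isomorphism $M \cong \trace{M}{X}$. For $(ii) \Rightarrow (iii)$, I would compose an isomorphism $M \cong \trace{M}{X}$ with the inclusion $\trace{M}{X} \hookrightarrow X$ to produce an injection $\iota \colon M \to X$. That this $\iota$ induces an isomorphism on Hom then follows by combining the isomorphism $\Hom_R(M,M) \cong \Hom_R(M, \trace{M}{X})$ coming from $M \cong \trace{M}{X}$ with the isomorphism $\Hom_R(M, \trace{M}{X}) \cong \Hom_R(M, X)$ supplied by Lemma \ref{tracebasic} applied to the trace submodule $\trace{M}{X} \subseteq X$.

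For $(iii) \Rightarrow (i)$, given such an injection $\iota$, the hypothesis says every $\varphi \colon M \to X$ equals $\iota \circ \psi$ for a unique $\psi \colon M \to M$, so $\im(\varphi) \subseteq \im(\iota)$. Consequently every $R$-linear image of $M$ in $X$ lies in $\im(\iota)$, whence $\trace{\im(\iota)}{X} \subseteq \im(\iota)$; the reverse inclusion is automatic, so by Lemma \ref{tracebasic}(ii) $\im(\iota)$ is a trace submodule of $X$, giving $(i)$.

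The main obstacle is the bookkeeping in the final implication: the Hom-isomorphism in $(iii)$ is stated with $M$ in the second slot, so I need to be attentive that the factorization $\varphi = \iota \circ \psi$ controls \emph{all} homomorphic images of $M$ in $X$ (not only those already obviously factoring through $\iota$), and that this in turn controls $\trace{\im(\iota)}{X}$ rather than just $\trace{M}{X}$. Remark \ref{iso_trace_equal} is what makes the transition between these two traces painless, and it is the quiet workhorse of the whole argument.
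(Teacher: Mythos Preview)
Your proposal is correct and follows essentially the same cyclic argument as the paper's proof, invoking Lemma~\ref{tracebasic} and Remark~\ref{iso_trace_equal} at the same junctures. Your treatment of $(iii)\Rightarrow(i)$ is slightly more explicit than the paper's (spelling out the factorization $\varphi=\iota\circ\psi$ and the role of Remark~\ref{iso_trace_equal} in identifying $\trace{M}{X}$ with $\trace{\im(\iota)}{X}$), but the substance is identical.
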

We next give an example of a trace submodule that is not an ideal.
\begin{exa}
Let $R=k[x]$, for a field $k$.  Consider the $R$-module $X=R/(x^2)$. The $R$-submodule $M=(x)/(x^2)$ is a trace submodule of $X$: indeed $M=\trace{k}{X}$, as for any homomorphism $\varphi:k\to X$, one has $(x)\im(\varphi)=0$, hence $\im(\varphi)\subseteq M$. Thus $k$ is a trace module in $X$ up to isomorphism (via the injection $k\to X$ defined by $1\mapsto x$), but $k$ is not a trace ideal of $R$ (even up to isomorphism).
\end{exa}

\begin{rmk}
Let $M$ be an $R$-module. Given an $R$-homomorphism $X\to Y$, there is an induced map $\trace{M}{X}\to \trace{M}{Y}$, yielding a covariant functor \[\trace{M}{-}:\ModR\to \ModR\]
that is left exact. This can be checked directly from the definition of $\trace{M}{-}$; see also Wisbauer \cite[45.11]{Wis91}. It is not right exact: take $R=k[x]/(x^2)$ and $I=(x)/(x^2)$. Here, $\trace{I}{-}$ does not preserve the surjection $R\to R/I$.
\end{rmk}





\section{Trace, fully invariant, and reflexive modules}\label{sec_fullyinv}
\noindent
In this section we connect the idea of being trace to the classic notions of being fully invariant and being reflexive in an ambient module. This allows us to find new examples of trace modules.

In particular, we next aim to note that trace submodules $M$ in $X$ are stable under endomorphisms of the module $X$, and traces up to isomorphism can be characterized by this fact when $X$ is an envelope from an enveloping class of modules. 

Let $R$ be a ring with unity. The next definition compares to the terminology used in \cite{Lam99}.

\begin{dfn}
Let $\iota:M\to X$ be an injection. We say that $M$ is \emph{fully invariant in $X$ via $\iota$} if each endomorphism $\varphi:X\to X$ restricts to an endomorphism of $\iota(M)$, that is, $\varphi(\iota(M))\subseteq \iota(M)$. If $M$ happens to be an $R$-submodule of $X$ (i.e., $\iota$ is simply inclusion), then we say that $M$ is \emph{fully invariant in $X$}. 
\end{dfn}
In particular, an $R$-submodule $M\subseteq X$ is fully invariant in $X$ provided restriction induces an $R$-algebra homomorphism 
\[ \Endo R X \longrightarrow \Endo R M\;.\]

Recall the following standard definition: 
\begin{dfn}\label{dfn_env}
Let $M$ be an $R$-module and let $\cV$ be a class of $R$-modules.  A \emph{$\cV$-preenvelope} of $M$ is an $R$-homomorphism $\varphi:M\to V$ with $V\in \mathcal{V}$, such that for any $\psi:M\to V'$ with $V'\in \cV$,  there exists $\alpha:V\to V'$ such that $\alpha\varphi=\psi$. A \emph{$\cV$-envelope} of $M$ is a $\cV$-preenvelope $\varphi:M\to V$ such that if $\alpha:V\to V$ satisfies $\alpha\varphi=\varphi$, then $\alpha$ is an isomorphism. Given another class $\cU$ of $R$-modules, we say that $\cV$ is \emph{(pre)enveloping for $\cU$} if every module in $\cU$ has a $\cV$-(pre)envelope.
\end{dfn}
Of course the most well-known example of a $\cV$-(pre)envelope is the case where $\cV=\Inj{R}$, the class of injective $R$-modules. In this case, they are simply referered to as injective (pre)envelopes.

\begin{prp}\label{fullyinv}
Let $\iota:M\to X$ be an injective $R$-homomorphism. If $M$ is trace in $X$ via $\iota$, then it is fully invariant in $X$ via $\iota$. If $M$ is fully invariant in $X$ via $\iota$, and each $R$-homomorphism $M\to X$ lifts to an endomorphism $X\to X$,  for example if $\iota:M\to X$ is a $\cV$-preenvelope for some class $\cV$, 
then $M$ is trace in $X$ via $\iota$.
\end{prp}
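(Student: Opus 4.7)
The plan is to unpack the definition of being a trace submodule using Lemma \ref{tracebasic} in the forward direction, and to produce the missing homomorphic images via the lifting hypothesis in the converse direction. Both implications are essentially direct manipulations, so the main conceptual point is keeping track of how $\iota$ identifies $M$ with $\im(\iota)$.

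For the first implication, I would assume $M$ is trace in $X$ via $\iota$, so that by Lemma \ref{tracebasic}, $\im(\iota) = \trace{\im(\iota)}{X}$. Given any endomorphism $\varphi : X \to X$, the composition $\varphi|_{\im(\iota)} : \im(\iota) \to X$ is an element of $\Hom_R(\im(\iota), X)$, and hence $\varphi(\im(\iota))$ is one of the summands in the trace; in particular $\varphi(\im(\iota)) \subseteq \trace{\im(\iota)}{X} = \im(\iota)$. This is exactly the statement that $M$ is fully invariant in $X$ via $\iota$.

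For the converse, suppose $M$ is fully invariant in $X$ via $\iota$ and every $R$-homomorphism $M \to X$ lifts through $\iota$ to an endomorphism of $X$. I want to show $\im(\iota) = \trace{\im(\iota)}{X}$, where the inclusion $\supseteq$ reduces to showing that $\alpha(\im(\iota)) \subseteq \im(\iota)$ for every $\alpha \in \Hom_R(\im(\iota), X)$. Given such an $\alpha$, form the composition $\alpha \iota : M \to X$ and apply the lifting hypothesis to produce an endomorphism $\widetilde{\alpha} : X \to X$ with $\widetilde{\alpha}\iota = \alpha\iota$. Since $\iota$ is injective, this says that $\widetilde{\alpha}$ and $\alpha$ agree on $\im(\iota)$, so by full invariance $\alpha(\im(\iota)) = \widetilde{\alpha}(\im(\iota)) \subseteq \im(\iota)$, as needed.

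Finally, I would note that the $\cV$-preenvelope case is just a reformulation of the lifting hypothesis: if $\iota : M \to X$ is a $\cV$-preenvelope with $X \in \cV$, then by Definition \ref{dfn_env} every $\psi : M \to X$ factors as $\psi = \alpha \iota$ for some $\alpha : X \to X$, which is exactly the condition used in the previous paragraph. The only potential wrinkle in the whole argument is the identification of $\widetilde{\alpha}$ with $\alpha$ on $\im(\iota)$, which relies critically on $\iota$ being injective; but this is part of the standing assumption on $\iota$, so no real obstacle arises.
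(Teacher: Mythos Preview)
Your proof is correct and follows essentially the same approach as the paper's own proof: both use the characterization $\im(\iota)=\trace{\im(\iota)}{X}$ from Lemma \ref{tracebasic} for the forward direction, and both establish the converse by lifting an arbitrary homomorphism to an endomorphism of $X$ and applying full invariance. Your version is slightly more explicit in tracking the identification of $M$ with $\im(\iota)$ and in spelling out the $\cV$-preenvelope case, but the arguments are otherwise identical.
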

\begin{proof}
Assume $M$ is trace in $X$ via $\iota$ and consider any homomorphism $\varphi:X\to X$. Since $M$ is trace in $X$ via $\iota$, the submodule $\iota(M)$ is a trace submodule of $X$. Thus the restriction $\varphi|_{\iota(M)}:\iota(M)\to X$ must have its image contained in $\trace{\iota(M)}{X}=\iota(M)$; see Remark \ref{tracebasic}. This means that $\varphi|_{\iota(M)}$ is an endomorphism of $\iota(M)$ and hence $M$ is fully invariant in $X$ via $\iota$.

Now assume that $M$ is fully invariant in $X$ via $\iota$, that is, $\iota(M)$ is a fully invariant submodule of $X$, and let $\varphi\in \hom R M X$. By assumption, $\varphi$ lifts to a homomorphism $\bar{\varphi} \in \hom R X X$, that is, $\varphi = \bar{\varphi}\iota$. Since $M$ is fully invariant in $X$ via $\iota$,
\[\varphi(M) = \bar{\varphi}\iota(M) \subseteq \iota(M)\;. \] 
Therefore $\iota(M)=\tau_M(X)$. Thus $M$ is trace in $X$ via $\iota$. 
\end{proof}

Recall that as long as a preenveloping class $\cV$ of $\ModR$ contains the injective $R$-modules, then for every $R$-module $M$ the $\cV$-preenvelope $\iota:M\to V$ is an injection.
\begin{cor}\label{trace_fully_inv_envelope}
Let $\iota:M\to V$ be a $\cV$-preenvelope such that $\iota$ is an injection. Then $M$ is trace in $V$ via $\iota$ if and only if $M$ is fully invariant in $V$ via $\iota$.  \qed
\end{cor}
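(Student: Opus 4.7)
The plan is to derive this corollary directly from Proposition \ref{fullyinv}, which already packages both implications under slightly weaker hypotheses. The forward direction is immediate: the first half of Proposition \ref{fullyinv} shows that whenever $M$ is trace in $V$ via an injection $\iota$, it is automatically fully invariant in $V$ via $\iota$, with no further assumption on $V$.

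For the reverse direction, the hypothesis to be verified is the lifting condition from the second half of Proposition \ref{fullyinv}: every $R$-homomorphism $\varphi\colon M\to V$ must factor as $\varphi=\bar\varphi\iota$ for some endomorphism $\bar\varphi\colon V\to V$. But this is exactly the defining property of a $\cV$-preenvelope, applied to the case $V'=V\in\cV$: given any $\varphi\colon M\to V$, Definition \ref{dfn_env} produces $\alpha\colon V\to V$ with $\alpha\iota=\varphi$, so we may take $\bar\varphi=\alpha$. Therefore the second half of Proposition \ref{fullyinv} applies and yields that $M$ is trace in $V$ via $\iota$.

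Since the statement is essentially a specialization of the proposition, there is no real obstacle; the only point to record is that the preenvelope axiom supplies precisely the lifting hypothesis demanded by Proposition \ref{fullyinv}. The proof is therefore a two-line invocation of that proposition together with Definition \ref{dfn_env}.
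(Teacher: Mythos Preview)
Your proposal is correct and matches the paper's approach exactly: the corollary carries a \qed{} with no explicit proof because Proposition~\ref{fullyinv} already states the converse holds ``for example if $\iota:M\to X$ is a $\cV$-preenvelope for some class $\cV$,'' so the result is an immediate specialization. Your write-up simply makes explicit the one-line observation that the preenvelope axiom in Definition~\ref{dfn_env} supplies the required lifting, which is precisely what the paper intends.
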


We take a moment to recall a classic notion introduced in \cite{JW61}:
\begin{dfn}
An $R$-module $M$ is called \emph{quasi-injective} if for any submodule $L\subseteq M$, a homomorphism $L\to M$ extends to an endomorphism of $M$.
\end{dfn}
In the case where $\cV$ is the class of injective $R$-modules, the previous corollary says that a module is trace in its injective envelope if and only if it is quasi-injective:
\begin{cor}\label{QI_trace}
Let $M$ be an $R$-module and $M\to E(M)$ its injective envelope. Then $M$ is trace in $E(M)$ via this map if and only if $M$ is quasi-injective.
\end{cor}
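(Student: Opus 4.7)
The plan is to reduce the statement to the classical Johnson--Wong theorem via Corollary \ref{trace_fully_inv_envelope}. Taking $\cV=\Inj{R}$, the injective envelope $\iota:M\to E(M)$ is a $\cV$-envelope and is injective, so Corollary \ref{trace_fully_inv_envelope} applies: it suffices to prove that $M$ is quasi-injective if and only if $\iota(M)$ is a fully invariant submodule of $E(M)$.

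For the easy direction, given $M$ fully invariant in $E(M)$, a submodule $L\subseteq M$, and a homomorphism $f:L\to M$, I would extend the composition $L\hookrightarrow M\hookrightarrow E(M)$ to some $\tilde f:E(M)\to E(M)$ by injectivity of $E(M)$. Full invariance then forces $\tilde f(M)\subseteq M$, so $\tilde f|_M$ is the desired endomorphism of $M$ extending $f$, which shows $M$ is quasi-injective.

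For the harder converse, suppose $M$ is quasi-injective and fix $\varphi\in\Endo{R}{E(M)}$ and $m_0\in M$; the goal is to prove $\varphi(m_0)\in M$. My plan is to set $L=\{r\in R:r\varphi(m_0)\in M\}$, verify that the rule $rm_0\mapsto r\varphi(m_0)$ gives a well-defined homomorphism $f:Lm_0\to M$ (since $rm_0=0$ forces $r\varphi(m_0)=\varphi(rm_0)=0$), extend $f$ to $g\in\Endo{R}{M}$ by quasi-injectivity, and then lift $g$ to $\tilde g\in\Endo{R}{E(M)}$ by injectivity. If $\varphi(m_0)\notin M$, then $\varphi(m_0)-g(m_0)$ is a nonzero element of $E(M)$, and essentialness of $M$ in $E(M)$ supplies $s\in R$ with $0\neq s(\varphi(m_0)-g(m_0))\in M$. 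Writing $s\varphi(m_0)=sg(m_0)+s(\varphi(m_0)-g(m_0))$ then shows $s\in L$, so $sm_0\in Lm_0$ and $f(sm_0)=sg(m_0)$ forces $s(\varphi(m_0)-g(m_0))=0$, the desired contradiction.

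The main obstacle is this converse: constructing $f$ on the partial domain $Lm_0$ and leveraging essentialness of $M$ in $E(M)$ to close the argument is the technical core of Johnson--Wong's theorem, and it must be executed carefully to avoid circular reasoning about $\varphi(m_0)$.
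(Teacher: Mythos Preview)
Your approach is the same as the paper's: reduce via Corollary~\ref{trace_fully_inv_envelope} to the Johnson--Wong equivalence between quasi-injectivity and full invariance in $E(M)$, which the paper simply cites as \cite[Theorem~6.74]{Lam99} rather than reproving. Your reproof of Johnson--Wong is correct, though note that the lift $\tilde g\in\Endo{R}{E(M)}$ you construct is never actually used in your argument, and the phrase ``$f(sm_0)=sg(m_0)$'' is really the chain $f(sm_0)=s\varphi(m_0)$ together with $g(sm_0)=f(sm_0)$ (as $g$ extends $f$) and $g(sm_0)=sg(m_0)$ (linearity).
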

\begin{proof}
Let $\iota:M\to E(M)$ be the injective envelope. By \cite[Theorem 6.74]{Lam99}, the submodule $\iota(M)$ is quasi-injective if and only if $\iota(M)$ is fully invariant in $E(M)$.  The result then follows by Corollary \ref{trace_fully_inv_envelope}.
\end{proof}

\begin{exa}
A semisimple $R$-module is trace in its injective envelope. This is because any such module is quasi-injective by \cite[Examples 6.72]{Lam99}.
\end{exa}

In addition to quasi-injectives as a source of trace modules (see \cite[Remark 6.71 and Example 6.72 ]{Lam99}), we have the following result. Compare condition (2) below to the characterization of trace submodules in Lemma \ref{tracebasic}.
\begin{prp}\label{reflexive}
Let $M\subseteq X$ be an $R$-submodule. Suppose the following hold:
\begin{enumerate}
    \item The homothety map $R\to \Hom_R(X,X)$ is a surjection, and 
    \item The induced map $\Hom_R(X,X)\to \Hom_R(M,X)$ is a surjection. 
\end{enumerate}
Then $M$ is a trace submodule of $X$.
\end{prp}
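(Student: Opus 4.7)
The plan is to combine the two hypotheses with Proposition~\ref{fullyinv}: condition (1) will force $M$ to be fully invariant in $X$, and condition (2) will supply the lifting property needed to convert full invariance into the trace property.

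First I would show that $M$ is fully invariant in $X$. Let $\varphi\in\Hom_R(X,X)$ be any endomorphism. By hypothesis (1), the homothety map is surjective, so there exists $r\in R$ with $\varphi=r\cdot(-)$. Since $M$ is an $R$-submodule of $X$, we have $\varphi(M)=rM\subseteq M$, which is precisely the statement that $M$ is fully invariant in $X$.

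Next, note that hypothesis (2) says exactly that every $R$-homomorphism $M\to X$ extends to an endomorphism of $X$, which is the lifting hypothesis appearing in the second half of Proposition~\ref{fullyinv}. Applying that proposition with $\iota$ equal to the inclusion $M\hookrightarrow X$ concludes that $M$ is trace in $X$ via the inclusion, i.e., $M$ is a trace submodule of $X$.

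If one prefers a self-contained argument bypassing Proposition~\ref{fullyinv}, one can equivalently verify $M=\trace{M}{X}$ directly: the inclusion $M\hookrightarrow X$ gives $M\subseteq\trace{M}{X}$, and for the reverse containment take any $\alpha\in\Hom_R(M,X)$; by (2) lift $\alpha$ to $\widetilde\alpha\in\Hom_R(X,X)$, then by (1) write $\widetilde\alpha=r\cdot(-)$ for some $r\in R$, so $\alpha(M)=\widetilde\alpha(M)=rM\subseteq M$. Summing over all such $\alpha$ gives $\trace{M}{X}\subseteq M$. There is no real obstacle here: the proof is a direct two-line chase once one recognizes that the homothety surjectivity replaces the typical need for an injective envelope structure on $X$, which is why I expect no technical difficulty.
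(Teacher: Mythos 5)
Your proof is correct and is essentially the paper's own argument: the paper likewise takes an arbitrary $\alpha\in\Hom_R(M,X)$, lifts it to an endomorphism of $X$ via condition (2), identifies that endomorphism as a homothety via condition (1), and concludes $\alpha(M)\subseteq M$. Your ``self-contained'' version at the end is exactly this chase, and the preliminary detour through Proposition~\ref{fullyinv} is just a repackaging of the same two steps.
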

\begin{proof}
Let $\alpha:M\to X$ be a homomorphism. By condition (2), there exists a lifting $\bar{\alpha}:X\to X$ of $\alpha$ along the inclusion $M\subseteq X$. By condition (1), $\bar{\alpha}$ must correspond to multiplication by some element of $R$, thus so does $\alpha$. It follows that $\alpha(M)\subseteq M$, that is, $M$ is a trace submodule of $X$. This idea is based on an argument given in the proof of \cite[Remark 3.2(i)]{LP18}.
\end{proof}


\begin{exa}\label{ex_reflexive}
Here are some examples afforded by Proposition \ref{reflexive}.
\begin{enumerate}
    \item Let $I$ be a left ideal of a ring $R$ such that $\Ext_R^1(R/I,R)=0$. Condition (1) holds trivially and condition (2) holds by the vanishing of Ext applied to the natural long exact sequence in Ext. Thus in this case $I$ is a trace ideal of $R$. In particular, every left ideal of a left self-injective ring is a trace ideal; this is a non-commutative extension of one of the implications of \cite[Theorem 3.5]{LP18}. See also \cite[Example 2.4]{Lindo1}.
    \item Let $R$ be a complete local ring with residue field $k$, and let $E(k)$ be the injective envelope of $k$. Every submodule of $E(k)$ is a trace submodule. Indeed, (1) holds as $R$ is complete by Matlis duality \cite[Theorem 3.2.13]{BH98}, and (2) holds for every submodule of $E(k)$ by injectivity of $E(k)$.
\end{enumerate}
\end{exa}

Goto, Isobe, and Kumashiro \cite[Theorem 4.1]{GIK20} prove that if $R$ is a commutative noetherian ring, then the existence of an embedding of an $R$-module $X$ into $\bigoplus_\fm E_R(R/\fm)$, over all maximal ideals $\fm$ of $R$, is equivalent to the condition that every submodule of $X$ is a trace submodule. The next consequence of Proposition \ref{reflexive} provides some insight into this result over any ring.

\begin{cor}\label{all_submod_trace}
Let $X$ and $Y$ be $R$-modules such that $\Hom_R(Y,Y)\cong R$. Assume that there exists an injective map $\alpha:X\to Y$ such that for every $R$-submodule $M\subseteq X$, the induced map $\alpha^*:\Hom_R(Y,Y)\to \Hom_R(M,Y)$ is surjective. Then every $R$-submodule of $X$ is a trace submodule of $X$.
\end{cor}
\begin{proof}
Let $M\subseteq X$ be an $R$-submodule and suppose $\alpha:X\to Y$ is an injective $R$-homomorphism satisfying the given conditions. By Proposition \ref{reflexive}, $\alpha(M)$ is a trace submodule of $Y$, and hence also of $\alpha(X)$ by Remark \ref{hereditary}. Since $\alpha$ is an injection, $M$ is a trace submodule of $X$.
\end{proof}

We end  the section by discussing $X$-reflexive modules, providing another source of fully invariant and trace submodules.

\begin{dfn}
Let $M$ and $X$ be $R$-modules. We say $M$ is \emph{$X$-reflexive} if the natural map \[
\xymatrix{M\ar[r] & \Hom_R(\Hom_R(M,X),X)\;,}\] given by sending $m$ in $M$ to evaluation at $m$, is an isomorphism.
\end{dfn}

\begin{lem}\label{refl}
Let $M$ and $X$ be $R$-modules such that there exists an injection $\iota: M \longrightarrow X$. 
 If $M$ is $X$-reflexive, 
 then $M$ is fully invariant in $X$ via $\iota$.
\end{lem}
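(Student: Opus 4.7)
The plan is to use $X$-reflexivity directly: any element of $\Hom_R(\Hom_R(M,X),X)$ must come from evaluation at some element of $M$, so to show that $\varphi(\iota(m))$ lies in $\iota(M)$ for every endomorphism $\varphi$ of $X$ and every $m\in M$, I would cook up a suitable element of $\Hom_R(\Hom_R(M,X),X)$ out of $\varphi$ and $m$, and then read off the required preimage in $M$.

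More concretely, fix an endomorphism $\varphi:X\to X$ and an element $m\in M$. First I would define the induced map
\[
\varphi_{*}:\Hom_R(M,X)\longrightarrow \Hom_R(M,X),\qquad f\longmapsto \varphi\circ f,
\]
and then consider the composition
\[
\ev_{m}\circ\varphi_{*}:\Hom_R(M,X)\longrightarrow X,\qquad f\longmapsto \varphi(f(m)),
\]
which is an $R$-linear map, hence an element of $\Hom_R(\Hom_R(M,X),X)$.

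Next, because $M$ is $X$-reflexive, the evaluation map $M\to\Hom_R(\Hom_R(M,X),X)$ is an isomorphism, so there exists a unique $m'\in M$ with
\[
\ev_{m}\circ\varphi_{*}=\ev_{m'},
\]
i.e., $\varphi(f(m))=f(m')$ for every $f\in\Hom_R(M,X)$. Specializing to $f=\iota$ gives $\varphi(\iota(m))=\iota(m')\in \iota(M)$. Since $m$ and $\varphi$ were arbitrary, this shows $\varphi(\iota(M))\subseteq \iota(M)$, so $M$ is fully invariant in $X$ via $\iota$.

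I do not expect any real obstacle here; the argument is a one-line application of reflexivity once the map $\ev_m\circ\varphi_*$ is written down. The only mild subtlety to mention is that the injection $\iota$ is used purely as a particular element of $\Hom_R(M,X)$ at which to evaluate the identity $\ev_m\circ\varphi_*=\ev_{m'}$; the hypothesis that $\iota$ is injective is not needed for the computation itself, but fits the framing of ``fully invariant via $\iota$'' in the statement.
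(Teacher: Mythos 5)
Your proof is correct. The pivotal computation is the same one the paper relies on --- apply reflexivity to the functional $f\mapsto\varphi(f(m))$ to produce $m'$ with $\varphi(f(m))=f(m')$ for all $f$, then specialize $f$ to $\iota$ --- but your route to it is genuinely more direct. The paper first passes to the trace submodule $\trace{M}{X}$, uses Hom--tensor adjunction and the traceness of $\trace{M}{X}$ to build a commutative diagram identifying $\Hom_R(\trace{M}{X},\trace{M}{X})$ with $\Hom_R(M,M)$ via a restriction map $\sigma$, and only then composes with the restriction $\Hom_R(X,X)\to\Hom_R(\trace{M}{X},\trace{M}{X})$ coming from full invariance of the trace submodule. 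You bypass all of that scaffolding by applying reflexivity directly to $\varphi\circ \varepsilon_M(m)=\ev_m\circ\varphi_*$ for $\varphi\in\Hom_R(X,X)$, which also makes visible that only \emph{surjectivity} of the evaluation map $M\to\Hom_R(\Hom_R(M,X),X)$ is needed (your word ``unique'' is harmless but unnecessary). What the paper's longer argument buys is the explicit isomorphism $\Hom_R(\trace{M}{X},\trace{M}{X})\cong\Hom_R(M,M)$ and its compatibility with the trace formalism developed elsewhere in the paper; what yours buys is brevity and a weaker hypothesis. One caveat applying equally to both proofs: for the double dual $\Hom_R(\Hom_R(M,X),X)$ to carry the module structure being used, one is implicitly working over a commutative ring (or with a specified bimodule structure), but that is an ambiguity in the lemma's statement, not a defect of your argument.
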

\begin{proof}
It suffices to show that the submodule $\iota(M)\subseteq X$ is fully invariant, thus we may assume $M\subseteq X$.

Application of $\Hom_R(-,X)$ to the exact sequence
\[  \hom R M X \otimes_R M \longrightarrow \trace{M}{X} \longrightarrow 0 \]
yields the first exact row in the following commutative diagram:
\[ \xymatrix{
0 \ar@{->}[r] & \hom R {\Tr M X} X \ar@{->}[r] \ar@{->}[dd]_{\cong}  &  \hom R {\hom R M X \otimes_R M} X  \ar@{->}[d] _{\cong}\\
&&\hom R {M} {\hom R {\hom R M X} {X}} \ar@{->}[d]_{\cong}\\
&\hom R {\Tr X M} {\Tr X M} \ar@{^{(}-->}[r]^{\sigma} & \hom R M M \;,
}
\]
where the isomorphisms in the diagram follow from the traceness of $\Tr X M$ in $X$, Hom-Tensor Adjunction, and the $X$-reflexivity of $M$. Together, these homomorphisms induce a homomorphism $\sigma$ from $\hom R {\Tr X M} {\Tr X M} $ to $ \hom R M M$.

Via the homomorphisms in the above diagram, note that a homomorphism $\psi$ in $\hom R {\Tr X M} {\Tr X M} $ is mapped to the homomorphism $\{m \mapsto \{\alpha \mapsto \psi(\alpha(m)) \} \} $ in $\hom R {M} {\hom R {\hom R M X} {X}}$. Since $M$ is $X$-reflexive, we know that $\{\alpha \mapsto \psi(\alpha(m)) \} = ev_{m'} $ for some $m' \in M$ so that $\psi(\alpha(m) ) = \alpha(m')$ for all $\alpha \in \hom R M X$. In particular, choosing $\alpha$ the inclusion of $M$ in $X$, one has $\psi(\alpha(m) ) = \alpha(m')$ implies $\psi(m) = m'$ and therefore \[\psi(\alpha(m)) = \alpha(m')= \alpha(\psi(m))\] for all $\alpha \in \hom R M X$. We conclude that $\{\alpha \to \psi(\alpha(m)) \}$ is evaluation at $\psi(m)$.  Because $ M \cong  {\hom R {\hom R M X} X}$ via $m \mapsto ev_m$ it follows that $\sigma(\psi) = \psi|_M \in \hom R M M$.



Since $\Tr X M$ is fully invariant in $X$, repeated restriction induces the mapping \[ \hom R X X \longrightarrow \hom R {\Tr X M} {\Tr X M}\longrightarrow \hom R M M\]
and so $M$ is an $\Endo R X$-submodule of $X$. 
\end{proof}
 
\begin{exa}\label{ex_ref}
The following are examples of $R$-modules $M\subseteq X$ such that $M$ is $X$-reflexive, hence fully invariant by Lemma \ref{refl}.

\begin{enumerate}
    \item If $R$ is a commutative Gorenstein ring of dimension 1, then a common example is an ideal containing a nonzerodivisor.  Such an ideal is contained in $R$ and is $R$-reflexive.
    \item If $R$ is a complete local ring with residue field $k$, then any finitely generated submodule of $E(k)$ is $E(k)$-reflexive and hence fully invariant in $E(k)$ by Matlis duality; see \cite[Theorem 3.2.13]{BH98} and compare to Example \ref{ex_reflexive}(2).
    \item  If $R$ is a local Cohen-Macaulay ring, with canonical module $\omega$ and $M$ is a maximal Cohen-Macaulay submodule of $\omega$, then $M$ is $\omega$-reflexive and hence fully invariant in $\omega$; see \cite[Theorem 3.3.10]{BH98}. See also \cite[Corollary 3.3.19]{BH98} for a case where $\omega$ is an ideal.
\end{enumerate}
 \end{exa}


\begin{cor}
Let $\iota:M\to X$ be an injective $R$-homomorphism. If $M$ is both $X$-reflexive and has the lifting property that each homomorphism in $\Hom_R(M,X)$ lifts to one in $\Hom_R(X,X)$, such as when $X$ is a preenvelope, then $M$ is trace in $X$ via $\iota$.  
\end{cor}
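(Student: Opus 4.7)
The plan is to chain together the two preceding results in this section. Since $M$ is $X$-reflexive and $\iota:M\to X$ is an injective $R$-homomorphism, Lemma \ref{refl} applies directly and yields that $M$ is fully invariant in $X$ via $\iota$. That is the first half of the proof; all of the nontrivial content about reflexivity is packaged into that lemma.

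With full invariance in hand, I would then invoke the second assertion of Proposition \ref{fullyinv}: if $M$ is fully invariant in $X$ via $\iota$ and each homomorphism in $\Hom_R(M,X)$ lifts along $\iota$ to an element of $\Hom_R(X,X)$, then $M$ is trace in $X$ via $\iota$. Both hypotheses hold here: full invariance was established in the previous step, and the lifting property is assumed as part of the corollary. This immediately gives that $M$ is trace in $X$ via $\iota$.

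To justify the parenthetical remark that the lifting property is automatic when $\iota:M\to X$ is a $\cV$-preenvelope (for any class $\cV$ containing $X$), I would just unwind Definition \ref{dfn_env}: given $\varphi\in\Hom_R(M,X)$, applying the universal property of the $\cV$-preenvelope $\iota:M\to X$ with target $V'=X\in\cV$ and map $\psi=\varphi$ produces a lift $\alpha:X\to X$ with $\alpha\iota=\varphi$. This is of course already observed in the statement of Proposition \ref{fullyinv}.

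There is no real obstacle here, as the corollary is essentially a direct concatenation of Lemma \ref{refl} with the second half of Proposition \ref{fullyinv}; the substantive work has already been done in those results, particularly the diagrammatic argument in Lemma \ref{refl} that leverages $X$-reflexivity of $M$ together with the traceness of $\trace{M}{X}$ in $X$.
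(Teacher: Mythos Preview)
Your proposal is correct and matches the paper's own proof, which simply reads ``This follows from Lemma~\ref{refl} along with Proposition~\ref{fullyinv}.'' You have merely unpacked that one-line citation (and additionally justified the parenthetical about preenvelopes), so there is nothing to add.
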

\begin{proof}
This follows from Lemma \ref{refl} along with Proposition \ref{fullyinv}.
\end{proof}

\begin{exa}
Let $R$ be a complete local ring with residue field $k$, and let $M$ be a finitely generated submodule of $E(k)$. As noted in Example \ref{ex_ref}(2), $M$ is fully invariant in $E(k)$. Injectivity of $E(k)$ provides the desired lifting property in the previous corollary, hence we actually obtain that $M$ is a trace submodule of $E(k)$.
\end{exa}

\section{Characterizing rings via traceness of subcategories of $\ModR$}\label{sec_trace_preenv}
\noindent
As earlier, $R$ denotes a ring with unity. The aim of this section is to first establish a general result in which we start to answer the question:  given subcategories $\cU$ and $\cV$ of $\ModR$, when is each module in $\cU$ a trace submodule of some module from $\cV$? Our approach is strongly motivated by Corollary \ref{QI_trace} and the proof of \cite[Theorem 6.83]{Lam99}, and as an application we characterize various classes of rings.

This is our first key technical result. It uses the fact that $R$ is a generator for the module category, that is, given any $R$-module $M$ one can find a surjection $R^{(I)}\to M$, for some index set $I$.
\begin{thm}\label{key_lemma}
Let $\cU$ and $\cV$ be subcategories of $\ModR$ such that $\cU$ is closed under finite direct sums and contains $R$, and $\cV$ is closed under isomorphisms and direct summands. Assume further that each module in $\cU$ injects into a module in $\cV$. The following are equivalent:
\begin{itemize}
\item[(i)] The containment $\cU\subseteq \cV$ holds.
\item[(ii)] Every module in $\cU$ is a trace submodule of a module in $\cV$.
\item[(iii)] Every module in $\cU$ is trace in a module in $\cV$ up to isomorphism.
\end{itemize}
If in addition $\cV$ is preenveloping for $\cU$, these are also equivalent to:
\begin{itemize}
\item[(iv)] Every module in $\cU$ is trace in some $\cV$-preenvelope up to isomorphism. 
\end{itemize}
If in addition $\cV$ is enveloping for $\cU$, these are also equivalent to:
\begin{itemize}
\item[$(v)$] Every module in $\cU$ is trace in its $\cV$-envelope up to isomorphism. 
\end{itemize}
\end{thm}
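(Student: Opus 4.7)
The plan is to establish the cycle (i) $\Rightarrow$ (ii) $\Rightarrow$ (iii) $\Rightarrow$ (i) for the main three-way equivalence, and then bolt on (iv) and (v) using the same loop together with the observation that the identity map $\id_M: M \to M$ qualifies as its own $\cV$-envelope whenever $M \in \cV$.

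Several implications are essentially formal. For (i) $\Rightarrow$ (ii), any $M \in \cU \subseteq \cV$ is a trace submodule of itself, since $\trace{M}{M} = M$; (ii) $\Rightarrow$ (iii) is immediate, as a trace submodule of $V$ is in particular trace in $V$ up to isomorphism via the inclusion. On the preenvelope side, (v) $\Rightarrow$ (iv) holds since an envelope is a preenvelope, and (iv) $\Rightarrow$ (iii) holds since a $\cV$-preenvelope is by definition a morphism into $\cV$. For (i) $\Rightarrow$ (v) (and a fortiori (i) $\Rightarrow$ (iv)), whenever $M \in \cV$ the identity is a $\cV$-envelope of $M$: any $\psi: M \to V'$ with $V' \in \cV$ factors as $\psi = \psi \circ \id_M$, and $\alpha \circ \id_M = \id_M$ forces $\alpha = \id_M$. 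Moreover $M$ is trivially trace in $M$ via $\id_M$.

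The main content of the theorem is the implication (iii) $\Rightarrow$ (i), and I expect this to be the principal obstacle. The trick is to exploit the closure hypotheses on $\cU$ by applying the trace property to $M \oplus R$ rather than to $M$ alone. Given $M \in \cU$, finite-sum closure together with $R \in \cU$ yields $M \oplus R \in \cU$, so (iii) produces an injection $\iota: M \oplus R \to V$ into some $V \in \cV$ with $\iota(M \oplus R) = \trace{\iota(M \oplus R)}{V}$. Splitting a homomorphism $M \oplus R \to V$ into its two components and invoking Remark \ref{iso_trace_equal}, this trace decomposes as $\trace{M}{V} + \trace{R}{V}$. Because every $v \in V$ is the image of $1$ under the $R$-homomorphism $R \to V$ sending $1 \mapsto v$, one has $\trace{R}{V} = V$. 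Hence $\iota(M \oplus R) = V$, so the injection $\iota$ is in fact a bijection, i.e., $M \oplus R \cong V$. Closure of $\cV$ under isomorphisms and direct summands then gives $M \in \cV$, establishing $\cU \subseteq \cV$.

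It is worth noting where each hypothesis enters: $R \in \cU$ and finite-sum closure of $\cU$ supply the pivotal module $M \oplus R$; the fact that $R$ is a generator of $\ModR$ is precisely what makes $\trace{R}{V} = V$; and closure of $\cV$ under isomorphisms and summands is used in the final step. The standing assumption that every module in $\cU$ injects into a module in $\cV$ is what guarantees that $\cV$-preenvelopes (and $\cV$-envelopes) are injective, which is needed for the phrase ``trace in $V$ up to isomorphism'' to be meaningful: if $M \hookrightarrow W \in \cV$ is any embedding and $M \to V$ is a $\cV$-preenvelope, then the latter must be injective as well by factoring the embedding through it.
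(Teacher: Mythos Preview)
Your proof is correct and follows essentially the same approach as the paper: the crux is the implication (iii)$\Rightarrow$(i), handled by applying the trace hypothesis to $M\oplus R$ and using that $R$ is a generator to force $\iota(M\oplus R)=V$. Your treatment of (iv) and (v) via the chain (v)$\Rightarrow$(iv)$\Rightarrow$(iii)$\Rightarrow$(i) is slightly more economical than the paper's (which re-runs the generator argument for (iv)$\Rightarrow$(i) directly), but the content is identical, including the observation that the standing injection-into-$\cV$ hypothesis is what makes $\cV$-preenvelopes injective.
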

\begin{proof}
The implication $(i)\Rightarrow(ii)$ is clear as every module is a trace submodule of itself, and the implication $(ii)\Rightarrow (iii)$ holds because a trace submodule of an ambient module is trace in the ambient module up to isomorphism (via the inclusion homomorphism).

Consider the implication $(iii)\Rightarrow(i)$: Let $U\in \cU$. By the hypotheses, the sum $U\oplus R$ belongs to $\cU$, and thus by $(iii)$ one has that $U\oplus R$ is trace in some module $V\in \cV$ up to isomorphism. That is, there is an injection $\iota:U\oplus R\to V$ such that $\im(\iota)$ is a trace submodule of $V$. As $R$ is a generator of $\ModR$, so are $U \oplus R$ and  $\im(\iota)$. So, $\im(\iota)$ being a trace submodule of $V$ implies $\im(\iota)=V$, that is, $U\oplus R$ is isomorphic to a module in, and thus belongs to, $\cV$ because $\cV$ is closed under isomorphisms. As $\cV$ is also closed under direct summands, one has that $U\in \cV$. Thus $\cU\subseteq \cV$.

Now suppose that $\cV$ is preenveloping for $\cU$.  The implication $(i)\Rightarrow (iv)$ follows from the fact that the identity map on a module in $\cV$ is a $\cV$-preenvelope. To justify the converse we first show that in this case any $\cV$-preenvelope of a module in $\cU$ is an injection. Let $U\in \cU$ and let $\varphi:U\to V$ be a $\cV$-preenvelope. By hypothesis there is an injection $\iota:U\to V'$ for some $V'\in \cV$. The definition of $\cV$-preenvelope yields a map $\psi:V\to V'$ such that $\psi\varphi=\iota$, and thus $\varphi$ must be an injection. Now, assuming  $(iv)$ let $U\in \cU$. It follows that $U\oplus R$ belongs to $\cU$, and there exists  a $\cV$-preenvelope, $\varphi:U\oplus R \to V$, such that $\varphi(U \oplus R)$ is a trace submodule of $V$. Since $R$ is a generator of $\ModR$, then so are $U\oplus R$ and $\varphi(U\oplus R)$. So $\varphi(U \oplus R)$ being  trace in $V$ implies $\varphi(U \oplus R)=V$.  As $\cV$ is closed under isomorphisms and direct summands, one has that $U\in \cV$, and $(i)$ follows.

The equivalence of $(i)$ and $(v)$ is proven similarly as that of $(i)$ and $(iv)$ mutatis mutandis.
\end{proof}

The equivalence of $(i)$ and $(iii)$ in the following result was first proven in \cite[Theorem 6.83]{Lam99}. As a result of Theorem \ref{key_lemma}  we recover and extend that characterization of semisimple rings:
 
\begin{thm}\label{SS_char} Let $R$ be a ring with unity. The following are equivalent:
\begin{enumerate}
\item[(i)] $R$ is semisimple.
\item[(ii)] Every $R$-module is trace up to isomorphism in its injective envelope.
\item[(iii)] Every $R$-module is quasi-injective.
\end{enumerate}
\end{thm}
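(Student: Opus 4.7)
The strategy is to specialize Theorem \ref{key_lemma} to the setting $\cU = \ModR$ and $\cV = \Inj R$, the class of injective $R$-modules, and then to bridge to quasi-injectivity using Corollary \ref{QI_trace}. The hypotheses of Theorem \ref{key_lemma} are immediate in this case: $\ModR$ is closed under finite direct sums and contains $R$; the class $\Inj R$ is closed under isomorphisms and direct summands; every $R$-module embeds into an injective; and $\Inj R$ is enveloping for $\ModR$ since injective envelopes always exist.

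With these hypotheses in hand, the equivalence $(i)\Leftrightarrow(v)$ of Theorem \ref{key_lemma} specializes to: every $R$-module is trace in its injective envelope up to isomorphism if and only if $\ModR \subseteq \Inj R$. By the classical characterization of semisimple rings (every $R$-module is injective), this yields the equivalence $(i)\Leftrightarrow(ii)$ of the theorem.

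To incorporate $(iii)$ I would close the loop via $(i) \Rightarrow (iii) \Rightarrow (ii)$. For $(i)\Rightarrow(iii)$: if $R$ is semisimple then every $R$-module $M$ is injective, so $E(M)=M$ and any homomorphism from a submodule $L\subseteq M$ extends to an endomorphism of $M$ by injectivity; hence $M$ is quasi-injective. For $(iii)\Rightarrow(ii)$: if $M$ is quasi-injective, Corollary \ref{QI_trace} says that $M$ is trace in $E(M)$ via the envelope map $M\to E(M)$, and since this map is an injection, $M$ is in particular trace in $E(M)$ up to isomorphism.

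There is no substantive obstacle; the proof is essentially an application of the general machine built in Theorem \ref{key_lemma} together with the translation between quasi-injectivity and traceness given by Corollary \ref{QI_trace}. The only small point to be mindful of is the distinction between \emph{trace via the specific envelope map} (as appearing in Corollary \ref{QI_trace}) and \emph{trace up to isomorphism} (as in condition $(ii)$); this is harmless because the envelope map is automatically an injection, so the former is a special case of the latter.
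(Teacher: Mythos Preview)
Your proposal is correct and follows essentially the same approach as the paper: both specialize Theorem~\ref{key_lemma} with $\cU=\ModR$ and $\cV=\Inj R$ to obtain $(i)\Leftrightarrow(ii)$, and both invoke Corollary~\ref{QI_trace} to incorporate $(iii)$. Your treatment is in fact slightly more careful than the paper's, since you explicitly route through $(i)\Rightarrow(iii)\Rightarrow(ii)$ and flag the distinction between ``trace via the envelope map'' and ``trace up to isomorphism,'' whereas the paper simply asserts that the remaining equivalence follows from Corollary~\ref{QI_trace}.
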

\begin{proof}
Applying Theorem \ref{key_lemma} to the subcategories $\cU=\ModR$ and $\cV=\operatorname{Inj}R$, the subcategory of injective $R$-modules, one obtains that every $R$-module is injective (that is, $R$ is semisimple) if and only if every $R$-module is trace up to isomorphism in its injective envelope. The remaining equivalence is by Corollary \ref{QI_trace}.
\end{proof}

The next result is a variation of Theorem \ref{key_lemma}. It will be useful, for example, when considering the map from a module to its double $R$-dual. 
\begin{thm}\label{key_lemma_2}
Let $\cU$ be a subcategory of $\ModR$. Let $F:\ModR\to \ModR$ be an additive functor with a natural transformation $\eta:\id\to F$ such that for each $M\in \cU$ the natural map $\eta_M:M\to F(M)$ is an injection. Assume that $R\in \cU$ and that $\cU$ is closed under finite direct sums. The following are equivalent:
\begin{itemize}
    \item[(i)] For every $M\in\cU$, the map $\eta_M:M\to F(M)$ is an isomorphism.
    \item[(ii)] Every module $M$ in $\cU$ is trace in $F(M)$ via $\eta_M:M\to F(M)$.
\end{itemize}
\end{thm}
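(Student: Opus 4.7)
The implication $(i)\Rightarrow(ii)$ is immediate: any module is a trace submodule of itself, so if $\eta_M$ is an isomorphism, then $\eta_M(M)=F(M)$ is vacuously trace in $F(M)$. The substantive direction is $(ii)\Rightarrow(i)$, and my plan is to adapt the generator trick used in the proof of Theorem \ref{key_lemma}: bootstrap from $M$ to $M\oplus R$ so that the image of $\eta_{M\oplus R}$ is forced to fill the ambient module, and then split off the $R$-summand via additivity.

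More concretely, fix $M\in\cU$. Since $\cU$ contains $R$ and is closed under finite direct sums, $M\oplus R\in\cU$. By hypothesis $(ii)$, the submodule $\eta_{M\oplus R}(M\oplus R)\subseteq F(M\oplus R)$ is a trace submodule. Because it is isomorphic to $M\oplus R$, this image contains $R$ as a direct summand and is therefore a generator of $\ModR$. Just as in the proof of Theorem \ref{key_lemma}, a trace submodule of an ambient module that happens to be a generator of $\ModR$ must coincide with the ambient module; hence $\eta_{M\oplus R}$ is surjective, and it is injective by the standing hypothesis, so it is an isomorphism.

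To descend from $M\oplus R$ to $M$, I would invoke additivity of $F$: the structure maps induce a natural isomorphism $F(M\oplus R)\cong F(M)\oplus F(R)$, and naturality of $\eta$ applied to the summand inclusions and projections identifies $\eta_{M\oplus R}$ with the componentwise map $\eta_M\oplus\eta_R$ under this isomorphism. Since $\eta_M\oplus\eta_R$ is now known to be a bijection, each component $\eta_M$ and $\eta_R$ is individually a bijection, yielding $(i)$.

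The one point that warrants care — though I do not expect it to be a genuine obstacle — is the interplay of additivity of $F$ and naturality of $\eta$ used to decompose $\eta_{M\oplus R}$ as $\eta_M\oplus\eta_R$; this is a formal consequence of $F$ preserving finite biproducts together with the naturality squares of $\eta$ against the canonical summand inclusions and projections.
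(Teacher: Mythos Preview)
Your proof is correct and follows essentially the same approach as the paper: both bootstrap to $M\oplus R$, use that a generator which is a trace submodule must be the whole ambient module to conclude $\eta_{M\oplus R}$ is an isomorphism, and then descend to $\eta_M$ via additivity. The only cosmetic difference is that the paper phrases the descent step as a Snake Lemma argument on the split exact sequence $0\to M\to M\oplus R\to R\to 0$ rather than directly decomposing $\eta_{M\oplus R}$ as $\eta_M\oplus\eta_R$, but these are equivalent.
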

\begin{proof}
The implication $(i)\Rightarrow(ii)$ is clear, so it is enough to justify the converse. 
%
Let $M\in \cU$. By assumption, one has $R\in \cU$ and $\cU$ is closed under finite direct sums. Hence $M\oplus R\in \cU$. Thus the natural maps $\eta_{M\oplus R}:M\oplus R\to F(M\oplus R)$ and $\eta_R:R\to F(R)$ are injections. Assumption $(ii)$ implies that $M\oplus R$ is trace in $F(M\oplus R)$ via $\eta_{M\oplus R}$. As $R$ is a generator for $\ModR$, the only way for this to happen is if $\im (\eta_{M\oplus R}) = F(M \oplus R)$, making $\eta_{M\oplus R}$ an isomorphism.

Application of $F$ to the split exact sequence $0 \to M\to M\oplus R \to R \to 0$ yields a commutative diagram with exact rows:
\[\xymatrix{
0 \ar[r] & M \ar[r]\ar@{^(->}[d]_{\eta_M} & M\oplus R \ar[r]\ar[d]_{\eta_{M\oplus R}}^{\cong} & R \ar[r] \ar@{^(->}[d]_{\eta_R} & 0\\
0 \ar[r] & F(M) \ar[r] & F(M\oplus R) \ar[r] & F(R) \ar[r] & 0\;.
}\]
By the Snake Lemma  $\eta_M:M\to F(M)$ is an isomorphism.
\end{proof}

\begin{rmk}
Writing $M^*$ for the  $R$-dual of $M$, $\hom R M R$, there is  a natural \emph{evaluation map}

\begin{center}
\begin{tabular}{cccc}
$\varepsilon_M:$&$M$ & $\longrightarrow $ & $ ({M^*})^*$\\
&$m$& $\mapsto$ & $ev_m: \psi \mapsto \psi(m)\;.$\\
\end{tabular}
\end{center}
\end{rmk}

\begin{dfn} 
A finitely generated $R$-module $M$ is said to be \emph{torsionless} if $\varepsilon_M$ is an injection, and \emph{reflexive} if $\varepsilon_M$ is an isomorphism.  
\end{dfn}

\begin{rmk}\label{TandR}
We will write $\cT$ for the full subcategory of torsionless $R$-modules  and $\cR$ for the full subcategory of reflexive $R$-modules. Note that $\mathcal{R} \subseteq \cT$.

The double dual functor $(-)^{**}$ is additive and there is a natural transformation from the identity functor to it. Moreover, $\cT$ is closed under finite direct sums; see \cite[4.65(c)]{Lam99}.

If $M$ is a module over a commutative noetherian domain, then the double dual $M^{**}$ is a reflexive module; see for example \cite[\href{https://stacks.math.columbia.edu/tag/0AV3}{Tag 0AV3}]{stacks-project}.
\end{rmk}

\begin{lem}\label{reflexive_preenvelope}
Assume that $R$ is a commutative noetherian domain.
The natural homomorphism $M\to M^{**}$ is an $\mathcal{R}$-preenvelope, and $\cR$ is preenveloping in $\ModR$. 
\end{lem}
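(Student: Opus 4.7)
The plan is to verify the universal property directly using naturality of $\varepsilon$ and the fact, noted in Remark~\ref{TandR}, that $M^{**}$ is reflexive for every $M$ when $R$ is a commutative noetherian domain. This last fact is crucial: it ensures that $\varepsilon_M:M\to M^{**}$ is at least a homomorphism into $\mathcal{R}$, so it is a candidate for an $\mathcal{R}$-preenvelope.

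First I would set up the universal property. Suppose $\psi\colon M\to N$ is any $R$-homomorphism with $N\in\mathcal{R}$. Applying the double dual functor yields $\psi^{**}\colon M^{**}\to N^{**}$, and since $N$ is reflexive, $\varepsilon_N\colon N\to N^{**}$ is an isomorphism. I would then define
\[\alpha := \varepsilon_N^{-1}\circ\psi^{**}\colon M^{**}\longrightarrow N.\]
Naturality of the evaluation map $\varepsilon$ gives a commutative square
\[\xymatrix{
M \ar[r]^-{\varepsilon_M} \ar[d]_{\psi} & M^{**} \ar[d]^{\psi^{**}}\\
N \ar[r]^-{\varepsilon_N}_-{\cong} & N^{**}\;,
}\]
so $\psi^{**}\circ\varepsilon_M=\varepsilon_N\circ\psi$, and consequently $\alpha\circ\varepsilon_M=\psi$. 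This shows $\varepsilon_M$ is an $\mathcal{R}$-preenvelope.

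Once the first assertion is established, the second is immediate: every module $M$ in $\ModR$ admits the $\mathcal{R}$-preenvelope $\varepsilon_M$, so $\mathcal{R}$ is preenveloping in $\ModR$ in the sense of Definition~\ref{dfn_env}.

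There is no real obstacle here; the entire argument reduces to naturality plus the citation that $M^{**}\in\mathcal{R}$ over a commutative noetherian domain. Worth noting is that the definition of a preenvelope does not require $\varepsilon_M$ to be injective (indeed, it can fail to be, e.g.\ when $M$ has torsion-like behavior or $M^*=0$), so no separate check of injectivity is needed for this statement.
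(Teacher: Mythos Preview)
Your proof is correct and follows essentially the same route as the paper: define the lift as $\varepsilon_N^{-1}\circ\psi^{**}$ and verify the factorization via the naturality square for $\varepsilon$. You are slightly more explicit than the paper in invoking Remark~\ref{TandR} to ensure $M^{**}\in\mathcal{R}$, and in noting that injectivity of $\varepsilon_M$ is not required; both points are valid and the argument is complete.
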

\begin{proof}
Let $M$ be any $R$-module. There is a natural evaluation homomorphism $\varepsilon_M:M\to M^{**}$. Given any reflexive $R$-module $X$ and homomorphism $\alpha: M \to X$, application of $\Hom_R(\Hom_R(-,R),R)$ induces a homomorphism $\alpha^{**}:M^{**}\to X^{**}$. One may check directly that there is a commutative diagram 
\[\xymatrix{
M \ar[r]^{\varepsilon_M} \ar[d]^\alpha & M^{**}\ar[d]^{\alpha^{**}}\\
X \ar[r]^{\varepsilon_X}_{\cong}  & X^{**}\;.
}\]
The map $\varepsilon_X^{-1}\alpha^{**}:M^{**}\to X$ thus satisfies $\alpha=(\varepsilon_X^{-1}\alpha^{**})\varepsilon_M$, so that $\varepsilon_M:M\to M^{**}$ is an $\mathcal{R}$-preenvelope; see Definition \ref{dfn_env}.
%
%
\end{proof}

\begin{thm}\label{Gor_dim1}
Let $R$ be a commutative noetherian ring. The following are equivalent.
\begin{enumerate}[label=(\roman*)]
\item $R$ is a  Gorenstein ring of dimension at most one. 
\item Every torsionless $R$-module is  reflexive.
\item Every torsionless $R$-module is trace in its double dual, via the natural map. 
\end{enumerate}
If $R$ is also a domain, these conditions are equivalent to:
\begin{enumerate}
    \item[(iv)] Every torsionless $R$-module is trace in some $\cR$-preenvelope up to isomorphism. 
\end{enumerate}
\end{thm}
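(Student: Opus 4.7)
The plan is to deduce the three (or four) equivalences separately, handling (i)$\Leftrightarrow$(ii) by appeal to a classical theorem and invoking the two general technical results from Section \ref{sec_trace_preenv} for the remaining equivalences. The equivalence of (i) and (ii) --- that a commutative noetherian ring is Gorenstein of dimension at most one precisely when every finitely generated torsionless module is reflexive --- is a classical result going back to Bass, and in the proof I would simply cite this and move on.

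For (ii)$\Leftrightarrow$(iii), my plan is to apply Theorem \ref{key_lemma_2} with $\cU=\cT$ the subcategory of torsionless $R$-modules, $F=(-)^{**}$ the double $R$-dual functor, and the natural transformation $\eta=\varepsilon\colon\id\to(-)^{**}$. The hypotheses of Theorem \ref{key_lemma_2} are immediate in this setting: $R\in\cT$ and $\cT$ is closed under finite direct sums by Remark \ref{TandR}, while by the very definition of torsionless each evaluation map $\varepsilon_M\colon M\to M^{**}$ is injective for $M\in\cT$. Under this identification, condition (i) of Theorem \ref{key_lemma_2} says exactly that every torsionless module is reflexive (our (ii)) and condition (ii) of that theorem says exactly that every torsionless $M$ is trace in $M^{**}$ via $\varepsilon_M$ (our (iii)), yielding the desired equivalence.

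For the final equivalence (ii)$\Leftrightarrow$(iv) under the added assumption that $R$ is a commutative noetherian domain, my plan is to apply Theorem \ref{key_lemma} with $\cU=\cT$ and $\cV=\cR$. The points to verify are: $\cT$ contains $R$ and is closed under finite direct sums (Remark \ref{TandR}); $\cR$ is closed under isomorphisms and under direct summands, the latter because $(M\oplus N)^{**}\cong M^{**}\oplus N^{**}$ identifies $\varepsilon_{M\oplus N}$ with $\varepsilon_M\oplus\varepsilon_N$, so a summand of a reflexive module is reflexive; every torsionless $M$ injects into a reflexive module via $\varepsilon_M\colon M\hookrightarrow M^{**}$, where $M^{**}\in\cR$ by Remark \ref{TandR} (this is where the domain hypothesis is needed); and $\cR$ is preenveloping for $\cT$ by Lemma \ref{reflexive_preenvelope}. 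The equivalence of conditions (i) and (iv) in Theorem \ref{key_lemma} then translates into (ii)$\Leftrightarrow$(iv) in the present statement. The main obstacle I anticipate is simply pinning down a clean reference for the classical (i)$\Leftrightarrow$(ii); once that is cited, the rest is bookkeeping against the hypotheses of the two key lemmas already in hand.
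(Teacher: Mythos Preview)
Your proposal is correct and matches the paper's own proof essentially line for line: the paper likewise cites Bass \cite[Theorem 6.2]{Bassubiquity} for (i)$\Leftrightarrow$(ii), applies Theorem \ref{key_lemma_2} with $\cU=\cT$ and $F=(-)^{**}$ for (ii)$\Leftrightarrow$(iii), and in the domain case applies Theorem \ref{key_lemma} with $\cU=\cT$, $\cV=\cR$ together with Lemma \ref{reflexive_preenvelope} for (ii)$\Leftrightarrow$(iv). Your hypothesis checks (closure of $\cR$ under summands, injection into $M^{**}$, etc.) are exactly those the paper carries out.
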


\begin{proof}
Let $\cT$ be the subcategory of torsionless $R$-modules. We apply Theorem \ref{key_lemma_2} to this subcategory and the double dual functor $(-)^{**}$. Note that for any torsionless $R$-module $M$ the natural map $M\to M^{**}$ is an injection. Also, $\cT$ contains $R$ and is closed under finite direct sums, see Remark \ref{TandR}. The equivalence of $(ii)$ and $(iii)$ thus follows from Theorem \ref{key_lemma_2}. The equivalence of (i) and (ii) is contained in \cite[Theorem 6.2]{Bassubiquity}.

In the case where $R$ is a commutative noetherian domain, apply Theorem \ref{key_lemma} to the subcategories $\cU=\cT$ and $\cV=\cR$. We must check that $\cR$ is closed under direct summands: say $M$ is reflexive and $M=L\oplus N$. Since the double dual functor $(-)^{**}$ is additive, the map $M\to M^{**}$ is simply the sum of maps $L\to L^{**}$ and $N\to N^{**}$, both of which must be isomorphisms as $M\to M^{**}$ is an isomorphism. Now, in light of Lemma \ref{reflexive_preenvelope}, the subcategory $\cR$ is preenveloping and so Theorem \ref{key_lemma} implies that every torsionless $R$-module is reflexive if and only if every torsionless $R$-module is trace in some $\cR$-preenvelope up to isomorphism. This gives the equivalence of $(ii)$ and $(iv$).
\end{proof}

We now turn to characterizing (not-necessarily commutative) Gorenstein rings of higher dimension in terms of trace modules, by utilizing the class of modules having finite Gorenstein injective dimension:
\begin{dfn}
A cochain complex $C^\bullet$ of $R$-modules is called a \emph{totally acyclic complex of injectives} if each $C^i$ is injective and both $C^\bullet$ and $\Hom_R(C^\bullet,J)$ are acyclic for every injective $R$-module $J$. An $R$-module $G$ is called \emph{Gorenstein injective} if there exists a totally acyclic complex of injectives $C^\bullet$ such that $G\cong \ker(C^0\to C^1)$. Finally, the \emph{Gorenstein injective dimension} of an $R$-module $M$ is defined to be the least integer $n$ such that there is an exact sequence $0 \to M \to G^0 \to \cdots \to G^n \to 0$ with each $G^i$ Gorenstein injective. 
\end{dfn}


\begin{dfn}
A ring $R$ is called \emph{Iwanaga-Gorenstein} if $R$ is both left and right noetherian and $R$ has finite injective dimension both as an $R$- and an $\Rop$-module.
\end{dfn}

\begin{thm}\label{Gorinj_char}
Let $R$ be a left and right noetherian ring. The ring $R$ is Iwanaga-Gorenstein of dimension at most $n$ if and only if every  left and right $R$-module is trace up to isomorphism in a module of Gorenstein injective dimension at most $n$. 
\end{thm}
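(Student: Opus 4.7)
The plan is to apply Theorem \ref{key_lemma} twice---once over $R$ and once over $\Rop$---with $\cV$ taken to be the subcategory of modules of Gorenstein injective dimension at most $n$, and then to invoke the classical characterization of Iwanaga-Gorenstein rings in terms of universally bounded Gorenstein injective dimension.

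For the application over $R$, let $\cU = \ModR$ and $\cV = \{M \in \ModR : \Gid M \le n\}$. The subcategory $\cU$ contains $R$ and is closed under finite direct sums. The subcategory $\cV$ is closed under isomorphisms trivially, and closed under direct summands by a standard property of Gorenstein injective dimension (Holm, \emph{Gorenstein homological dimensions}). Since $R$ is left noetherian, every left $R$-module embeds in its injective envelope, which has Gorenstein injective dimension $0 \le n$, so the embedding hypothesis of Theorem \ref{key_lemma} is satisfied. The theorem then yields that every left $R$-module has Gorenstein injective dimension at most $n$ if and only if every left $R$-module is trace up to isomorphism in some module of Gorenstein injective dimension at most $n$. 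Running the same argument with $R$ replaced by $\Rop$ gives the corresponding statement for right $R$-modules.

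It remains to identify the homological condition ``every left and every right $R$-module has Gorenstein injective dimension at most $n$'' with the ring-theoretic condition that $R$ is Iwanaga-Gorenstein of dimension at most $n$. The forward direction is standard: if $\id_R R \le n$ and $\id_{\Rop} R \le n$, a short dimension-shifting argument combined with the definition of Gorenstein injective yields $\Gid M \le n$ for every module on either side. The converse is the substantive part: for a left and right noetherian ring, bounded Gorenstein injective dimension on both sides forces $\id_R R \le n$ and $\id_{\Rop} R \le n$; this is a classical theorem of Enochs and Jenda.

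The main obstacle is not the application of Theorem \ref{key_lemma}, which is essentially automatic once the closure and embedding properties of $\cV$ are recorded, but rather the invocation of this classical equivalence between Iwanaga-Gorenstein rings and rings of universally bounded Gorenstein injective dimension. In a fully self-contained write-up one would either cite this directly or sketch the dimension-shifting and cosyzygy arguments that translate the Gorenstein injective dimension bound back into finiteness of $\id_R R$ and $\id_{\Rop} R$.
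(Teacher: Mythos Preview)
Your proposal is correct and follows essentially the same route as the paper: both apply Theorem~\ref{key_lemma} with $\cU=\ModR$ (and $\cU=\ModRop$) and $\cV$ the class of modules of Gorenstein injective dimension at most $n$, verifying the same closure and embedding hypotheses, and then invoke the Enochs--Jenda characterization of Iwanaga--Gorenstein rings (the paper cites \cite[Theorem~12.3.1]{EJ00}). One minor remark: the existence of an injective envelope does not require the noetherian hypothesis, so that clause is unnecessary, but it does no harm.
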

\begin{proof}
Let $\cV$ and $\cV^\mathrm{op}$ be the classes of $R$- and $\Rop$-modules having Gorenstein injective dimension at most $n$.  By \cite[Theorem 12.3.1]{EJ00}, one has that $R$ is an Iwanaga-Gorenstein ring of dimension at most $n$ if and only if both $\ModR\subseteq \cV$ and $\ModRop\subseteq \cV^\mathrm{op}$ hold. In order to apply Theorem \ref{key_lemma}, it is sufficient to check that one can apply this result to the subcategories $\cU=\ModR$ and $\cV$. Note that any $R$-module injects into a module in $\cV$, as $\Inj{R}\subseteq \cV$. Further, $\cV$ is closed under isomorphisms and direct summands. The result follows from Theorem \ref{key_lemma}.
\end{proof}


As recently noted by Goto, Isobe, and Kumashiro \cite[Lemma 4.6]{GIK20}, every ideal in a von Neumann regular ring is a trace ideal. We recall next that such rings can be defined in terms of fp-injective modules, and use this class of modules to characterize von Neumann regular rings in terms of trace modules.

\begin{dfn}
An $R$-module $I$ is called \emph{fp-injective} if $\Ext_R^1(F,I)=0$ for each finitely presented $R$-module $F$. The class of fp-injective $R$-modules is denoted $\fpinj{R}$, and $\fpinj{R}$-preenvelopes are called fp-injective preenvelopes.
\end{dfn}


\begin{rmk}
Von Neumann regular rings are those rings over which all modules are flat, see \cite[Example 2.32(d) and Theorem 4.21]{Lam99}. Equivalently, a von Neumann regular ring is a ring over which all modules are fp-injective; see for example \cite[Remark 1.3]{CT19}. 
\end{rmk}

\begin{thm}\label{VNR_char}
A ring $R$ is von Neumann regular if and only if every $R$-module is trace in an fp-injective preenvelope up to isomorphism.
\end{thm}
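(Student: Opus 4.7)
The plan is to deduce this theorem directly from Theorem \ref{key_lemma} applied to the subcategories $\cU = \ModR$ and $\cV = \fpinj{R}$, matching the stated condition to part $(iv)$ of that theorem. The key observation is that by the remark preceding the theorem statement, $R$ is von Neumann regular if and only if every $R$-module is fp-injective, that is, $\ModR \subseteq \fpinj{R}$, which is precisely condition $(i)$ of Theorem \ref{key_lemma} in this setup. So the proof reduces to verifying the hypotheses of Theorem \ref{key_lemma} for this choice of $\cU$ and $\cV$.

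First I would verify the routine structural hypotheses. The class $\cU = \ModR$ contains $R$ and is closed under finite direct sums. The class $\cV = \fpinj{R}$ is closed under isomorphisms and under direct summands: if $M \oplus N$ is fp-injective and $F$ is finitely presented, then $\Ext_R^1(F, M)$ is a direct summand of $\Ext_R^1(F, M \oplus N) = 0$, so both $M$ and $N$ are fp-injective. Moreover, every $R$-module embeds into its injective envelope, and every injective module is fp-injective, so every module in $\cU$ injects into a module in $\cV$.

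The only substantive ingredient remaining is that $\fpinj{R}$ is preenveloping in $\ModR$. This is a classical consequence of the small-object/Eklof--Trlifaj machinery: the class $\fpinj{R}$ is the right $\Ext^1$-orthogonal of a set of representatives for the isomorphism classes of finitely presented $R$-modules, and such orthogonal classes form the right half of a complete cotorsion pair, so in particular every $R$-module admits an fp-injective preenvelope. I would cite a standard reference such as \cite{EJ00} for this. With all hypotheses in hand, Theorem \ref{key_lemma} delivers the equivalence $(i)\Leftrightarrow(iv)$ for this choice of $\cU$ and $\cV$, which upon translating through the remark is exactly the claimed characterization of von Neumann regular rings.

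I do not anticipate any real obstacle: the argument is a direct application of the earlier technical theorem, and the only part requiring external input is the existence of fp-injective preenvelopes, which is well-documented in the literature.
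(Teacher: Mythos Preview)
Your proposal is correct and matches the paper's own proof: both apply Theorem \ref{key_lemma} with $\cU=\ModR$ and $\cV=\fpinj{R}$, reducing the statement to the equivalence $(i)\Leftrightarrow(iv)$ there. The paper is terser, citing \cite[Theorem 3.4]{Trl00} for the existence of injective fp-injective preenvelopes, whereas you spell out the verification of the structural hypotheses and invoke the Eklof--Trlifaj machinery via \cite{EJ00}; either reference is fine.
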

\begin{proof}
Apply Theorem \ref{key_lemma} to the categories $\cU=\ModR$ and $\cV=\fpinj{R}$. To do this, recall from \cite[Theorem 3.4]{Trl00} that every $R$-module injects into an fp-injective preenvelope. 
\end{proof}



We end this section with an application to regular rings. Let $(R,\fm,k)$ be a Cohen-Macaulay local ring having a canonical module. If $\dim R=n$, recall (for example from \cite[Definition 11.4]{LW12}) that an $R$-module $\omega$ is a \emph{canonical module} if it is maximal Cohen-Macaulay, has finite injective dimension, and satisfies $\dim_k\Ext_R^n(k,\omega)=1$. 
Let $\modR$ be the subcategory of finitely generated $R$-modules. For an integer $n$, let $\fid{n}{R}\subseteq \modR$ be the subcategory of finitely generated $R$-modules having injective dimension at most $n$ and let $\MCM{R}\subseteq \modR$ be the subcategory of maximal Cohen-Macaulay $R$-modules, that is, those finitely generated $R$-modules having depth equal to $\dim R$.  Note that by \cite[Corollary 5.5]{Bas62}, if $n=\dim R$ one has $\fid{n}{R}$ is precisely the subcategory of finitely generated $R$-modules of finite injective dimension.

The next fact is due to Auslander and Buchweitz \cite{AB89}; see also \cite{LW12}.
\begin{prp}\label{AB_fid_env}
Let $R$ be a Cohen-Macaulay local ring having a canonical module and set $n=\dim R$. The class $\fid{n}{R}$ is enveloping for $\modR$, is closed under direct summands, and every module in $\modR$ injects into a module from $\fid{n}{R}$. 
\end{prp}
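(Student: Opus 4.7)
The plan is to reduce the proposition to the classical Auslander--Buchweitz approximation theorem from \cite{AB89}: for every $M \in \modR$, there is a short exact sequence
\[
0 \to M \xrightarrow{\varphi} Y \to X \to 0
\]
with $Y$ of finite injective dimension and $X$ maximal Cohen--Macaulay. This single construction will deliver both the embedding of $M$ into $\fid{n}{R}$ and the $\fid{n}{R}$-envelope of $M$.

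First I would handle the injective-dimension bound: by \cite[Corollary 5.5]{Bas62}, already invoked in the paragraph preceding the statement, every finitely generated $R$-module of finite injective dimension over a Cohen--Macaulay local ring of dimension $n$ with canonical module has injective dimension at most $n$; hence $Y \in \fid{n}{R}$, and $M$ embeds into $\fid{n}{R}$ as required. Closure under direct summands is routine: if $Y = A \oplus B$ with $Y \in \fid{n}{R}$, then $\Ext^{i}_R(-,A)$ is a direct summand of $\Ext^{i}_R(-,Y)$, so it vanishes for $i > n$, placing $A$ (and likewise $B$) in $\fid{n}{R}$.

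For the enveloping property, I would proceed in two steps. The preenvelope condition follows from Ext-orthogonality: for any $I' \in \fid{n}{R}$ one has $\Ext^1_R(X, I') = 0$, since $X$ is MCM and $I'$ has finite injective dimension over a Cohen--Macaulay ring; applying $\Hom_R(-,I')$ to the AB sequence then shows that every $\alpha : M \to I'$ factors through $\varphi : M \to Y$. Upgrading this to the minimality condition of Definition \ref{dfn_env} is where I expect the main obstacle. I would work with a minimal AB sequence (splitting off any direct summand of $Y$ that lies in the image of $\varphi$ or is mapped isomorphically onto a summand of $X$) and then run a Wakamatsu-type argument: an endomorphism $\alpha : Y \to Y$ with $\alpha\varphi = \varphi$ induces, via the snake lemma and the Ext-vanishing above, an endomorphism of the MCM cokernel $X$; minimality together with a Fitting/Nakayama argument on the local ring $R$ then forces $\alpha$ to be an automorphism.

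Alternatively, rather than building the envelope by hand, one can appeal directly to the $(\mathcal{X},\omega)$-approximation / cotorsion-pair framework of \cite{AB89}, in which the existence of envelopes into the right-hand class is one of the standard outputs for Cohen--Macaulay local rings with a canonical module; \cite{LW12} supplies a modern exposition from which the three required conclusions can be read off.
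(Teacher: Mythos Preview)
Your proposal is correct and aligns with the paper's approach: the paper's proof is nothing more than a pointer to \cite[Theorem 11.17 and Proposition 11.13]{LW12} (the modern account of the Auslander--Buchweitz theory you invoke), together with the remark that closure under summands is straightforward. Your ``alternatively'' paragraph is precisely what the paper does, and your primary route simply unpacks the content of those citations---the AB approximation sequence, the Ext-orthogonality between MCM modules and modules of finite injective dimension, and the passage from preenvelope to envelope via a minimality/Wakamatsu argument dual to \cite[Proposition 11.13]{LW12}.
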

\begin{proof}
The fact that $\fid{n}{R}$ is preenveloping follows from \cite[Theorem 11.17]{LW12}; the fact that it is enveloping is proved in a manner dual to \cite[Proposition 11.13]{LW12}. 
It is straightforward to check that $\fid{n}{R}$ is closed under direct summands. The last claim is also by \cite[Theorem 11.17]{LW12}.
\end{proof}


\begin{thm}\label{char_reg}
Let $R$ be a Cohen-Macaulay local ring having a canonical module and set $n=\dim R$. The ring $R$ is regular if and only if every finitely generated $R$-module is trace in its $\fid{n}{R}$-envelope up to isomorphism.
\end{thm}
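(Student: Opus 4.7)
The plan is to invoke Theorem \ref{key_lemma} with $\cU=\modR$ and $\cV=\fid{n}{R}$. First I would verify the hypotheses: the subcategory $\modR$ contains $R$ and is closed under finite direct sums, while Proposition \ref{AB_fid_env} supplies the remaining ingredients, namely that $\fid{n}{R}$ is closed under direct summands, that every finitely generated $R$-module injects into a module from $\fid{n}{R}$, and that $\fid{n}{R}$ is enveloping for $\modR$; closure of $\fid{n}{R}$ under isomorphisms is immediate. Theorem \ref{key_lemma}, $(i)\Leftrightarrow (v)$, then asserts that every finitely generated $R$-module is trace in its $\fid{n}{R}$-envelope up to isomorphism if and only if $\modR\subseteq \fid{n}{R}$.

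It therefore remains to prove that $\modR\subseteq \fid{n}{R}$ if and only if $R$ is regular. For the forward direction, assume every finitely generated $R$-module has finite injective dimension. In particular $R$ itself has finite self-injective dimension, so $R$ is Gorenstein. Over a Gorenstein local ring, a finitely generated module has finite injective dimension if and only if it has finite projective dimension; combined with the hypothesis, this forces every finitely generated $R$-module, and in particular the residue field $k$, to have finite projective dimension. The Auslander--Buchsbaum--Serre theorem then yields that $R$ is regular.

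Conversely, assume $R$ is regular. Then $R$ has finite global dimension equal to $n$, so every finitely generated $R$-module has finite projective dimension. Since regular rings are Gorenstein, the same equivalence invoked above shows that every finitely generated $R$-module has finite injective dimension; Bass's formula then gives $\id_R M \le \dim R = n$ for all such $M$, so $\modR\subseteq \fid{n}{R}$.

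The main step requiring care is the verification that the abstract machinery of Theorem \ref{key_lemma} applies in this setting, but this is furnished essentially off the shelf by Proposition \ref{AB_fid_env}; the equivalence of $\modR\subseteq\fid{n}{R}$ with regularity reduces to standard facts about Gorenstein local rings and the Auslander--Buchsbaum--Serre characterization of regular local rings, so no substantial obstacle remains.
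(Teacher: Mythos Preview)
Your proof is correct, but it follows a somewhat different route than the paper's own argument. You apply Theorem~\ref{key_lemma} with $\cU=\modR$, obtain $\modR\subseteq\fid{n}{R}$, and then deduce regularity via the chain: $R$ has finite self-injective dimension, hence is Gorenstein; over a Gorenstein local ring finite injective dimension coincides with finite projective dimension for finitely generated modules; therefore the residue field has finite projective dimension; so $R$ is regular by Auslander--Buchsbaum--Serre. The paper instead applies Theorem~\ref{key_lemma} with the smaller class $\cU=\MCM{R}$, obtaining only $\MCM{R}\subseteq\fid{n}{R}$, and then invokes the structure theorem that every MCM module of finite injective dimension is a sum of copies of the canonical module $\omega$, together with a result of Takahashi on syzygies with semidualizing summands, to conclude regularity.

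Your approach is more elementary: it avoids both the structure of MCM modules of finite injective dimension and the appeal to \cite{Tak06}, relying only on standard facts about Gorenstein and regular local rings. The paper's approach, by contrast, extracts regularity from a weaker inclusion ($\MCM{R}\subseteq\fid{n}{R}$ rather than $\modR\subseteq\fid{n}{R}$), which is a sharper use of the hypothesis in principle, though here the full hypothesis on all of $\modR$ is available anyway. Both arguments verify the hypotheses of Theorem~\ref{key_lemma} via Proposition~\ref{AB_fid_env}, so there is no gap in your use of the abstract machinery.
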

\begin{proof}
If $R$ is regular, then $\modR=\fid{n}{R}$ and the ``only if'' implication follows. For the converse, we may per Proposition \ref{AB_fid_env} apply Theorem \ref{key_lemma} to the classes of modules $\cU=\MCM{R}$ and $\cV=\fid{n}{R}$. This 
yields that $\MCM{R}\subseteq \fid{n}{R}$.  By \cite[Proposition 11.7]{LW12}, every maximal Cohen-Macaulay $R$-module of finite injective dimension is isomorphic to a direct sum of copies of $\omega$. A high syzygy of the residue field $k$ is MCM, hence also of finite injective dimension, thus contains $\omega$ as a direct summand. 
By \cite[Corollary 4.4]{Tak06} this is enough to imply that $R$ is regular.
\end{proof}

\section{Characterizing rings via traceness of ideals in enveloping classes}\label{sec_trace_env}\noindent
We now turn to characterizing rings $R$ in terms of traceness of ideals in their $\cV$-envelopes, where $\cV$ is an enveloping subcategory of $\ModR$. 
\begin{dfn}
For any class $\cV$ of $R$-modules, one defines the left orthogonal of $\cV$ as the class ${}^\perp \cV=\{M\in \ModR \mid \Ext_R^1(M,V)=0\text{ for all } V\in \cV\}$.
\end{dfn}



\begin{thm}\label{key2}
Let $\cV\subseteq \ModR$ be enveloping.  The following are equivalent:
\begin{itemize}
\item[(i)] The ring $R$ is in $\cV$.
\item[(ii)] Every left ideal $I\subseteq R$ with $R/I\in {}^\perp \cV$ is trace in its $\cV$-envelope up to isomorphism.
\item[(iii)] Every principal left ideal $I\subseteq R$ with $R/I\in {}^\perp \cV$ is trace in its $\cV$-envelope up to isomorphism.
\end{itemize}
Indeed, if $R\in \cV$ and $I$ is a left ideal with $R/I\in {}^\perp \cV$, then $I$ is a trace ideal of $R$.
\end{thm}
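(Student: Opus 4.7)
The plan is to prove the chain $(i)\Rightarrow (ii)\Rightarrow (iii)\Rightarrow (i)$, with the ``Indeed'' clause emerging at the start of the $(i)\Rightarrow (ii)$ step.

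For $(i)\Rightarrow (ii)$: I would first observe that with $R\in \cV$ one has $\Ext_R^1(R/I,R) = 0$, so Example \ref{ex_reflexive}(1) immediately gives that $I$ is a trace ideal of $R$, settling the ``Indeed'' clause. More importantly, for any $V'\in \cV$ the vanishing $\Ext_R^1(R/I,V') = 0$ applied to the long exact sequence coming from $0\to I\to R\to R/I\to 0$ shows every $R$-homomorphism $I\to V'$ extends to one $R\to V'$; equivalently, the inclusion $j\colon I\hookrightarrow R$ is itself a $\cV$-preenvelope of $I$. Letting $\iota\colon I\to V$ be the $\cV$-envelope, I would compare $\iota$ and $j$ via the universal properties to produce $p\colon V\to R$ and $q\colon R\to V$ with $p\iota=j$ and $qj=\iota$. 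Since $(qp)\iota = qj = \iota$, the envelope axiom forces $qp$ to be an isomorphism, so $p$ is a split monomorphism; in particular $V$ is isomorphic to a direct summand $p(V)\subseteq R$, and $p(\iota(I)) = j(I) = I$. From here, $I$ being a trace submodule of $R$ together with Remark \ref{hereditary} applied to the chain $I\subseteq p(V)\subseteq R$ gives that $I$ is a trace submodule of $p(V)$; transporting along the isomorphism $p\colon V\xrightarrow{\cong} p(V)$ then shows that $\iota(I)$ is a trace submodule of $V$.

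The implication $(ii)\Rightarrow (iii)$ is immediate. For $(iii)\Rightarrow (i)$, I would apply the hypothesis to the principal left ideal $I = R = R\cdot 1$, for which $R/I = 0$ lies in ${}^\perp\cV$ trivially. If $\iota\colon R\to V$ is the $\cV$-envelope of $R$, then $\iota(R)$ being a trace submodule of $V$ combined with the fact that $R$ is a generator of $\ModR$ (using the same argument as in the proof of Theorem \ref{key_lemma}) forces $\iota(R) = V$, so $\iota$ is an isomorphism and $R\in\cV$.

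The main obstacle is the split-summand observation inside $(i)\Rightarrow (ii)$: one must first recognize that the Ext-vanishing upgrades the inclusion $j$ to a $\cV$-preenvelope, so that the minimality built into the $\cV$-envelope $\iota$ then forces the envelope of $I$ to sit (up to isomorphism) as a direct summand of $R$. Without this identification there is no direct way to invoke Remark \ref{hereditary} to transfer the already-established traceness of $I$ in $R$ to traceness of $\iota(I)$ in $V$.
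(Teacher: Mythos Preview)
Your proposal is correct and follows essentially the same route as the paper's proof. Both arguments establish the ``Indeed'' clause first (you cite Example \ref{ex_reflexive}(1), the paper cites Proposition \ref{reflexive} directly---same content), then in $(i)\Rightarrow(ii)$ use the $\Ext$-vanishing to lift between $R$ and the $\cV$-envelope $V$, invoke the envelope axiom to realize $V$ as a direct summand of $R$, and finish with Remark \ref{hereditary}; your phrasing of the inclusion $j\colon I\hookrightarrow R$ as itself a $\cV$-preenvelope is a clean conceptual packaging of the same lifting step the paper does by hand. The $(iii)\Rightarrow(i)$ step via $I=R$ and the generator argument is identical.
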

\begin{proof}
First assume that $R\in \cV$ and let $I\subseteq R$ be a left ideal with $R/I\in {}^\perp \cV$. One has that $I$ is a trace ideal of $R$ by applying Proposition \ref{reflexive} when $X=R$ and use that the inclusion $I\subseteq R$ induces a surjection $\Hom_R(R,R)\to \Hom_R(I,R)$ because $\Ext_R^1(R/I,R)=0$.

The remainder of the proof is cyclic, where the implication $(ii)\Rightarrow(iii)$ is immediate, and $(iii)\Rightarrow(i)$ follows because $R$ is such a principal ideal, and  because $R$ is a generator in $\Mod R$, $R$ being trace in its $\cV$-envelope up to isomorphism implies that $R$ is in $\cV$, because enveloping classes are closed under isomorphism.

$(i)\Rightarrow(ii)$: Assume that $R\in \cV$ and let $I\subseteq R$ be a left ideal with $R/I\in {}^\perp \cV$. We first show that the $\cV$-envelope of $I$ is isomorphic to a direct summand of $R$. Let $\varphi:I\to V$ be the $\cV$-envelope of $I$. 
As $R\in \cV$ and $\varphi:I\to V$ is a $\cV$-preenvelope, the inclusion $\iota:I\to R$ lifts to a map $\alpha:V\to R$ so that $\alpha\varphi=\iota$. Further, the exact sequence $0\to I \xrightarrow{\iota} R \to R/I \to 0$ induces an exact sequence
\[\xymatrix{
0 \ar[r] & \Hom_R(R/I,V)\ar[r] & \Hom_R(R,V) \ar[r]^{
\iota^*}& \Hom_R(I,V)\ar[r] & 0,
}\]
as $R/I\in {}^\perp \cV$ implies that $\Ext_R^1(R/I,V)=0$. It follows that the map $\varphi:I\to V$ lifts to a map $\beta:R\to V$ such that $\beta\iota=\varphi$. 

Putting this together, we see that $\varphi=\beta\iota=\beta\alpha\varphi$. As $\varphi:I\to V$ is a $\cV$-envelope, it follows that $\beta\alpha$ is an isomorphism, hence $V$ is isomorphic to a direct summand, say $V'$, of $R$. Finally, since $I\subseteq V'\subseteq R$ and we have shown above that $I$ is a trace ideal of $R$, we obtain by Remark \ref{hereditary} that $I$ is a trace submodule of $V'$. Thus $I$ is trace in $V$ up to isomophism.
\end{proof}

As an immediate consequence, we obtain the following:
\begin{cor}\label{inj_char}
The following are equivalent:
\begin{itemize}
\item[(i)] The ring $R$ is injective as a left $R$-module.
\item[(ii)] Every left ideal is trace in its injective envelope up to isomorphism.
\item[(iii)] Every principal left ideal is trace in its injective envelope up to isomorphism.
\end{itemize}
Indeed, if $R$ is injective as a left $R$-module, then every left ideal is a trace ideal.
\end{cor}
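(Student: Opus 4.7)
The plan is to derive this corollary as an immediate specialization of Theorem \ref{key2}, applied to the class $\cV = \Inj R$ of injective left $R$-modules. For this choice, both the ``enveloping'' hypothesis of Theorem \ref{key2} and the orthogonality side condition on $R/I$ appearing in its parts (ii) and (iii) are standard and simplify away entirely, so the corollary is essentially a direct quotation.

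First I would invoke the classical Eckmann--Schopf theorem that every $R$-module admits an injective envelope, so that $\Inj R$ is enveloping in $\ModR$ and Theorem \ref{key2} applies with $\cV = \Inj R$. Next I would observe that by the defining property of injectivity, $\Ext^1_R(N, E) = 0$ for every $R$-module $N$ and every injective $R$-module $E$; in particular, $R/I \in {}^\perp \Inj R$ holds automatically for every left ideal $I \subseteq R$. Consequently, the side conditions ``$R/I \in {}^\perp \cV$'' in Theorem \ref{key2}(ii) and (iii) become vacuous, and statements (i)--(iii) of the corollary match the corresponding parts of Theorem \ref{key2} verbatim. The closing ``indeed'' clause, asserting that every left ideal of a left self-injective ring is a trace ideal, is exactly the closing assertion of Theorem \ref{key2} in this specialization (and of course traces back to the use of Proposition \ref{reflexive} with $X = R$ inside that proof).

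The main---and essentially only---obstacle is recognizing that for $\cV = \Inj R$ the $\Ext^1$-vanishing hypothesis on $R/I$ is automatic, so that no further bookkeeping on which ideals qualify is needed; once this is noted, no additional argument is required beyond citing Theorem \ref{key2}.
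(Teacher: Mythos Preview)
Your proposal is correct and follows exactly the paper's own approach: the paper's proof is the single sentence ``Apply Theorem \ref{key2} to the class $\cV=\Inj{R}$,'' and your additional remarks (that $\Inj R$ is enveloping by Eckmann--Schopf and that ${}^\perp\Inj R=\ModR$ so the condition $R/I\in{}^\perp\cV$ is vacuous) simply spell out the verifications the paper leaves implicit.
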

\begin{proof}
Apply Theorem \ref{key2} to the class $\cV=\Inj{R}$.
\end{proof}
\begin{rmk}
If $R$ is a commutative noetherian ring, then $R$ is artinian Gorenstein if and only if every ideal is a trace ideal; this is a result of Lindo and Pande \cite[Theorem 3.5]{LP18}, for which the local assumption was removed by Kobayashi and Takahashi \cite[Proposition 3.1]{IsoTrace}. The previous corollary gives the forward implication of this result, and also some insight to the reverse implication for rings that are not necessarily noetherian. For example, Goto, Isobe, and Kumashiro \cite[Lemma 4.6]{GIK20} noted that any commutative von Neumann regular ring has the property that every ideal is a trace ideal, but not all such rings are self-injective, so we do \emph{not} expect a commutative but non-noetherian ring to be self-injective if and only if every ideal is a trace ideal. On the other hand, Corollary \ref{inj_char} says that a ring is injective as an $R$-module if and only if every ideal is trace in its \emph{injective envelope}, and this does not require any commutative or noetherian assumption.
\end{rmk}



We end this section with another application to characterizing Gorenstein rings in the context of a Cohen-Macaulay ring having a canonical module:
\begin{thm}\label{char_Gor}
Let $R$ be a Cohen-Macaulay local ring having a canonical module and set $n=\dim R$. The following are equivalent:
\begin{itemize}
\item[(i)] The ring $R$ is Gorenstein.
\item[(ii)] Every ideal $I$ such that $R/I$ is MCM is trace in its $\fid{n}{R}$-envelope up to isomorphism.
\item[(iii)] Every principal ideal $I$ such that $R/I$ is MCM is trace in its $\fid{n}{R}$-envelope up to isomorphism.
\end{itemize}
\end{thm}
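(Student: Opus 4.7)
The plan is to apply Theorem \ref{key2} to the class $\cV = \fid{n}{R}$, which is enveloping by Proposition \ref{AB_fid_env}. First observe that $R \in \cV$ if and only if $R$ is Gorenstein: for a Cohen-Macaulay local ring with canonical module, $R$ has finite injective dimension over itself precisely when it is Gorenstein.

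The substantive step is to verify the inclusion $\MCM{R} \subseteq {}^\perp \cV$. For this, I would invoke Ischebeck's formula: for finitely generated $R$-modules $M$ and $N$ over a Noetherian local ring, if $N$ has finite injective dimension, then $\Ext_R^i(M,N) = 0$ for all $i > \depth R - \depth M$. Since $R$ is Cohen-Macaulay, $\depth R = n$, and if $M$ is MCM then $\depth M = n$, so $\Ext_R^i(M,N) = 0$ for all $i \geq 1$ and all $N \in \cV$, placing $M$ in ${}^\perp \cV$. Alternatively, one can leverage the fact used in the proof of Theorem \ref{char_reg} that every finitely generated module of finite injective dimension admits a finite resolution by copies of $\omega$ (via iterated MCM approximations combined with \cite[Proposition 11.7]{LW12}), and then dimension-shift using $\Ext_R^i(M,\omega) = 0$ for $i \geq 1$ when $M$ is MCM.

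With this inclusion in hand, the equivalences follow from Theorem \ref{key2}. For (i) $\Rightarrow$ (ii): if $R$ is Gorenstein then $R \in \cV$, and Theorem \ref{key2} gives that every ideal $I$ with $R/I \in {}^\perp \cV$ is trace in its $\cV$-envelope up to isomorphism; any $I$ with $R/I$ MCM satisfies this by the inclusion just shown. The implication (ii) $\Rightarrow$ (iii) is immediate. For (iii) $\Rightarrow$ (i): apply the hypothesis to the principal ideal $I = R$, whose quotient $R/I = 0$ is MCM; this gives that $R$ is trace in its $\cV$-envelope up to isomorphism, and since $R$ is a generator of $\ModR$, the argument used in the proof of Theorem \ref{key2} forces $R$ to be isomorphic to its $\cV$-envelope, hence $R \in \cV$ and $R$ is Gorenstein.

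The main obstacle is the inclusion $\MCM{R} \subseteq {}^\perp \cV$; the rest of the proof is a mechanical consequence of Theorem \ref{key2}. Once Ischebeck's formula (or the $\omega$-resolution approach) is in place, the three implications fall out essentially without further work.
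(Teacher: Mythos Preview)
Your proof is correct and follows essentially the same approach as the paper: apply Theorem \ref{key2} with $\cV=\fid{n}{R}$, enveloping by Proposition \ref{AB_fid_env}. The paper invokes the full equality ${}^\perp\fid{n}{R}=\MCM{R}$ from \cite[Proposition 11.3(i)]{LW12} so that Theorem \ref{key2} applies verbatim, whereas you prove only the inclusion $\MCM{R}\subseteq {}^\perp\cV$ via Ischebeck and then close the cycle (iii)$\Rightarrow$(i) directly with $I=R$; this is a cosmetic difference, not a different route.
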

\begin{proof}
The equivalence of (i) -- (iii) follows as an application of Theorem \ref{key2} to the class $\cV=\fid{n}{R}$, which is enveloping by Proposition \ref{AB_fid_env}, and the fact that ${}^\perp \fid{n}{R}=\MCM{R}$; see \cite[Proposition 11.3(i)]{LW12}.  
\end{proof}

\section*{Acknowledgment}
\noindent
Thanks to Steffen Koenig and Henning Krause who invited the first author to visit the Institut f{\"u}r Algebra und Zahlentheorie at Stuttgart University and the Bielefeld Representation Theory Group (BIREP) at Bielefeld University, during which times many of the ideas in this paper were first developed.


\end{document}